\newtheorem{lem}{Lemma}[section]
\newtheorem{thm}{Theorem}[section]
\newtheorem{prop}{Proposition}[section]
\theoremstyle{definition}
\theoremstyle{example}
\newtheorem{example}{Example}[section]
\def\blfootnote{\xdef\@thefnmark{}\@footnotetext}
\begin{document}
\title{Center of $\mathcal{H}_R(0,c_{\tilde{s}})$ associated to a pro-$p$-Iwahori Weyl group}\blfootnote{This paper is supported by NSF grant DMS-1463852.}
\author{Yijie Gao}
\affil{Deparment of Mathematics \\ University of Maryland, College Park}
\date{}
\maketitle

\begin{abstract}
Let $W$ be an Iwahori Weyl group and $W(1)$ be an extension of $W$ by an abelian group. In \cite{vm1}, Vigneras gave a description of the $R$-algebra $\mathcal{H}_R(q_{\tilde{s}},c_{\tilde{s}})$ associated to $W(1)$, and in \cite{vm2}, Vigneras gave a basis of the center of $\mathcal{H}_R(q_{\tilde{s}},c_{\tilde{s}})$ using the Bernstein presentation of $\mathcal{H}_R(q_{\tilde{s}},c_{\tilde{s}})$. In this paper, we restrict to the case where $q_{\tilde{s}}=0$ and use the Iwahori-Matsumoto presentation to give a basis of the center of $\mathcal{H}_R(0,c_{\tilde{s}})$.\blfootnote{Key words and phrases. Iwahori-Weyl group, pro-$p$-Iwahori Hecke algebra, representation theory, conjugacy class, Bruhat order. Primary 20C08; Secondary 20F55, 22E50.}
\end{abstract}

\section{Introduction}

Iwahori-Hecke algebras are deformations of the group algebras
of finite Coxeter groups $W$ with nonzero parameters. They play an important role in the study of representations of finite groups of Lie type. In \cite{gr}, Geck and Rouquier gave a basis of the center of Iwahori-Hecke algebras. The basis is closely related to minimal length elements in the conjugacy classes of $W$.

The 0-Hecke algebra was used by Carter and Lusztig in \cite{cl} in
the study of $p$-modular representations of finite groups of Lie type. $0$-Hecke algebras are deformations of the group algebras of finite Coxeter groups with zero parameter. In \cite{xh}, He gave a basis of the center of $0$-Hecke algebras associated to finte Coxeter groups. The basis is closely related to maximal length elements in the conjugacy classes of $W$.

Affine Hecke algebras are deformations of the group algebras of affine Weyl groups $W^{\text{aff}}$. They appear naturally in the representation theory of reductive $p$-adic groups. In \cite{gl89}, Lusztig gave a basis of the center of affine Hecke algebras. In \cite{xh}, He also mentioned a similar proof could be applied to give a basis of the center of affine $0$-Hecke algebras. The basis is closely related to finite conjugacy classes in $W^{\text{aff}}$.

Let $\mathbf{G}$ be a connected reductive group over a $p$-adic field $F$. The study of mod-$p$ representations of $\mathbf{G}(F)$ naturally involves the pro-$p$-Iwahori Hecke algebra of $\mathbf{G}(F)$. Let $R$ be a commutative ring. In \cite{vm1}, Vigneras discussed the $R$-algebra $\mathcal{H}_R(q_{\tilde{s}},c_{\tilde{s}})$ which generalizes the pro-$p$-Iwahori Hecke algebra of $\mathbf{G}(F)$. In \cite{vm2}, Vigneras gave a basis of the center of $\mathcal{H}_R(q_{\tilde{s}},c_{\tilde{s}})$ by using the Bernstein relation and alcove walks (the definition of alcove walk can be found in \cite{ug}). The basis of center is closely related to finite conjugacy classes in $W(1)$.

In general, the expression of the center in \cite{vm2} is complicated if we want to write it out explicitly by Iwahori-Matsumoto presentation. But for $R$-algebras $\mathcal{H}_R(0,c_{\tilde{s}})$, we can give an explicit description of the center by Iwahori-Matsumoto presentation. This is the main result of this paper. In Section $2$, we review the definition of $\mathcal{H}_R(q_{\tilde{s}},c_{\tilde{s}})$ and some properties of the group $W(1)$. In Section $3$, we define a new operator $r_{v,w}$. In Section $4$, we show maximal length terms of a central element in $\mathcal{H}_R(0,c_{\tilde{s}})$ comes from finite conjugacy classes in $W(1)$. In Section $5$, we prove some technical results regarding $r_{v,w}$, where $w$ is in some finite conjugacy class. In Section $6$, we give a basis of the center of $\mathcal{H}_R(0,c_{\tilde{s}})$. In Section $7$, we give some examples to show how the main result works. 

\section{Preliminary}

The symbols $\mathbb{N},\mathbb{Z},\mathbb{R}$ refers to the natural numbers, the integers and the real numbers.

Let $\Sigma$ be a reduced and irreducible root system with simple system $\Delta$. Let $W_0$ be the finite Weyl group of $\Sigma$, and $S_0$ be the set of simple reflections corresponding to $\Delta$. Then $S_0$ is a generating set of $W_0$.

Let $\mathcal{V}=\mathbb{Z}\Sigma^{\vee}\otimes_{\mathbb{Z}}\mathbb{R}$ be the $\mathbb{R}$-vector space spanned by the dual root system $\Sigma^{\vee}$. Let $\Sigma^{\text{aff}}$ be the affine root system associated to $\Sigma$, i.e. the set $\Sigma+\mathbb{Z}$ of affine functionals on $\mathcal{V}$. The term hyperplane always means the null-set of an element of $\Sigma^{\text{aff}}$.

Choose a special vertex $\mathfrak{v}_0\in\mathcal{V}$ such that $\mathfrak{v}_0$ is stabilized by the action of $W_0$. Let $\mathfrak{C}_0$ be the Weyl chamber at $\mathfrak{v}_0$ corresponding to $S_0$ and let $\mathfrak{A}_0\in\mathfrak{C}_0$ be the alcove for which $\mathfrak{v}_0\in\bar{\mathfrak{A}}_0$ where $\bar{\mathfrak{A}}_0$ is the closure of $\mathfrak{A}_0$. 

Let $W^{\text{aff}}$ be the affine Weyl group of $\Sigma^{\text{aff}}$ and $S^{\text{aff}}$ be the set of affine reflections corresponding to walls of $\mathfrak{A}_0$. Then $S^{\text{aff}}$ is a generating set of $W^{\text{aff}}$ extended from $S_0$. Denote by $\ell: W^{\text{aff}}\rightarrow\mathbb{N}$ the length function relative to the generating set $S^{\text{aff}}$. The group $W^{\text{aff}}$ is also equipped with the Bruhat order $\leq$.

Let $F$ be a non-archimedean local field and let $\mathbf{G}$ be a connected reductive $F$-group. Let $\mathbf{T}\subseteq \mathbf{G}$ be a maximal $F$-split torus and set $\mathbf{Z}$ and $\mathbf{N}$ be $\mathbf{G}$-centralizer and $\mathbf{G}$-normalizer of $\mathbf{T}$ respectively. Let $\mathbf{G}(F),\mathbf{T}(F),\mathbf{Z}(F),\mathbf{N}(F)$ be the groups of $F$-points of $\mathbf{G},\mathbf{T},\mathbf{Z},\mathbf{N}$.  Then the group $\mathbf{Z}(F)$ admits a unique parahoric subgroup $\mathbf{Z}(F)_0$. We may define the Iwahori-Weyl group of $(\mathbf{G},\mathbf{T})$ to be the quotient $W:=\mathbf{N}(F)/\mathbf{Z}(F)_0$.

There are two ways to express the Iwahori-Weyl group as a semidirect product. By the work of Bruhat and Tits, it is known that there exists a reduced root system $\Sigma$ such that the corresponding affine Weyl group is a subgroup of $W$. Denoting by $W_0$ the finite Weyl group of $\Sigma$, it can be shown that $W=\Lambda\rtimes W_0$ and that $W=W^{\text{aff}}\rtimes\Omega$. The action of $W^{\text{aff}}$ on $\mathcal{V}$ extends to an action of $W$. The subgroup $\Lambda$ acts on $\mathcal{V}$ by translations and the subgroup $\Omega$ acts on $\mathcal{V}$ by invertible affine transformations that stabilize the base alcove $\mathfrak{A}_0$ in $\mathcal{V}$. 

The group $\Omega$ stabilizes $S^{\text{aff}}$. We can extend the length function $\ell: W^{\text{aff}}\rightarrow\mathbb{N}$ to $W$ by inflation along the projection $W^{\text{aff}}\rtimes\Omega\rightarrow W^{\text{aff}}$. Then the subgroup of length $0$ elements in $W$ is $\Omega$. The Bruhat order on $W$ can also be defined. Let $v=v'\tau,w=w'\tau'$ be two elements in $W$ where $v',w'\in W^{\text{aff}}$ and $\tau,\tau'\in\Omega$, then $v\leq w$ if and only if $v\leq w$ and $\tau=\tau'$. 

The group $\Lambda$ is finitely generated and abelian and the action of $\Lambda$ on $\mathcal{V}$ is given by the homomorphism $$\nu: \Lambda\rightarrow\mathcal{V}$$ such that $\lambda\in\Lambda$ acts as translation by $\nu(\lambda)$ in $\mathcal{V}$. The group $\Lambda$ is normalized by $x\in W_0$: $x\lambda x^{-1}$ acts as translation by $x(\nu(\lambda))$. The length $\ell$ is constant on each $W_0$-conjugacy class in $\Lambda$. By Lemma $2.1$ in \cite{vm2}, a conjugacy class of $W$ is finite if and only if it is contained in $\Lambda$, and infinite if and only if it is disjoint from $\Lambda$.

We'll later use the following geometric characterization of length (see Lemma $5.1.1$ in \cite{sr}):
\begin{lem}\label{13}
Let $w\in W$ and $s\in S^{\text{aff}}$. If $H_{s}$ is the hyperplane stabilized by $s$, then
\begin{itemize}
\item
$\ell(sw)>\ell(w)$ if and only if $\mathfrak{A}_0$ and $w(\mathfrak{A}_0)$ are on the same side of $H_s$,
\item
$\ell(ws)>\ell(w)$ if and only if $\mathfrak{A}_0$ and $w(\mathfrak{A}_0)$ are on the same side of $w(H_s)$.
\end{itemize}
\end{lem}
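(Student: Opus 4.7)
The plan is to use the classical hyperplane-counting interpretation of length in an affine Coxeter group, together with simple geometric manipulations, after first eliminating the length-zero part $\Omega$.

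Write $w = w'\tau$ with $w' \in W^{\text{aff}}$ and $\tau \in \Omega$. Since $\tau$ stabilizes $\mathfrak{A}_0$ and has length zero, $w(\mathfrak{A}_0) = w'(\mathfrak{A}_0)$ and $\ell(w) = \ell(w')$. For the first bullet we have $\ell(sw) = \ell(sw')$ directly, while for the second we use $s' := \tau s \tau^{-1} \in S^{\text{aff}}$ to rewrite $ws = w's'\tau$, so that $\ell(ws) = \ell(w's')$ and $w(H_s) = w'(H_{s'})$. Both assertions thus reduce to the case $w \in W^{\text{aff}}$, where I would invoke the standard geometric formula that $\ell(w)$ equals the number of hyperplanes in $\Sigma^{\text{aff}}$ separating $\mathfrak{A}_0$ from $w(\mathfrak{A}_0)$ (this is the content underlying Lemma $5.1.1$ of \cite{sr}).

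For the first bullet, $s$ is the reflection across $H_s$, so $s(\mathfrak{A}_0)$ is the unique alcove adjacent to $\mathfrak{A}_0$ across $H_s$; in particular $\mathfrak{A}_0$ and $s(\mathfrak{A}_0)$ are separated only by $H_s$. Applying the isometry $s$ to the separating set for the pair $(\mathfrak{A}_0, sw(\mathfrak{A}_0))$ yields the separating set for $(s(\mathfrak{A}_0), w(\mathfrak{A}_0))$. Comparing this with the separating set for $(\mathfrak{A}_0, w(\mathfrak{A}_0))$, the two sets differ by exactly $\{H_s\}$. Consequently $\ell(sw) = \ell(w) \pm 1$, and the strict inequality $\ell(sw) > \ell(w)$ holds precisely when $H_s$ was \emph{not} already in the separating set for $(\mathfrak{A}_0, w(\mathfrak{A}_0))$, i.e.\ when $\mathfrak{A}_0$ and $w(\mathfrak{A}_0)$ lie on the same side of $H_s$.

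The second bullet is obtained from the first by the inversion trick $\ell(ws) = \ell(sw^{-1})$ and $\ell(w) = \ell(w^{-1})$. Applying the first bullet with $w^{-1}$ in place of $w$, the condition $\ell(ws) > \ell(w)$ is equivalent to $\mathfrak{A}_0$ and $w^{-1}(\mathfrak{A}_0)$ lying on the same side of $H_s$; applying the affine isometry $w$ to both alcoves and to the hyperplane converts this into the statement that $w(\mathfrak{A}_0)$ and $\mathfrak{A}_0$ lie on the same side of $w(H_s)$. The only mildly delicate point in the whole argument is this right-multiplication case, which requires the inversion step and the observation that an affine isometry preserves the relation of lying on the same side of a hyperplane; everything else is routine bookkeeping once the hyperplane interpretation of length is available.
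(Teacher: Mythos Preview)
Your argument is correct: the reduction from $W$ to $W^{\text{aff}}$ via the $\Omega$-component is clean, the hyperplane-counting interpretation of length together with the symmetric-difference observation for adjacent alcoves gives the first bullet, and the inversion $\ell(ws)=\ell(sw^{-1})$ combined with the fact that an affine isometry preserves the ``same side'' relation gives the second.

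There is nothing to compare against, however: the paper does not prove this lemma at all but merely quotes it as Lemma~5.1.1 of \cite{sr}. Your write-up therefore supplies strictly more than the paper does. If you keep it, you might streamline the first paragraph slightly (the detour through $s'=\tau s\tau^{-1}$ for the second bullet is unnecessary once you are going to use the inversion trick anyway, since that trick already handles the full $W$ directly after the first bullet is established).
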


We will also use the following result on Bruhat order.

\begin{lem}\label{14}
Let $x,y\in W$ with $x\leq y$. Let $s\in S^{\text{aff}}$. Then
\begin{itemize}
\item
$\min\{x,sx\}\leq\min\{y,sy\}$ and $\max\{x,sx\}\leq\max\{y,sy\}$.
\item
$\min\{x,xs\}\leq\min\{y,ys\}$ and $\max\{x,xs\}\leq\max\{y,ys\}$.
\end{itemize}
\end{lem}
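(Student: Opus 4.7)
This is a form of the standard \emph{lifting property} (sometimes called Property Z) of Bruhat order on a Coxeter group, due to Deodhar. My plan is to reduce from the Iwahori-Weyl group $W$ to the Coxeter group $W^{\text{aff}}$ and then invoke the classical subword argument there.

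For the reduction, write $x = v'\tau$ and $y = w'\tau'$ with $v',w' \in W^{\text{aff}}$ and $\tau,\tau' \in \Omega$. The definition of $\leq$ on $W$ recorded just before Lemma~\ref{13} forces $\tau = \tau'$ and $v' \leq w'$. For $s \in S^{\text{aff}}$ and left multiplication one has $sx = (sv')\tau$ and $sy = (sw')\tau$, so the asserted inequalities on $\{x,sx\}$ versus $\{y,sy\}$ translate verbatim to the analogous inequalities for $\{v',sv'\}$ versus $\{w',sw'\}$ inside $W^{\text{aff}}$. For right multiplication, since $\Omega$ stabilizes $S^{\text{aff}}$, setting $s' := \tau s \tau^{-1} \in S^{\text{aff}}$ gives $xs = v's'\tau$ and $ys = w's'\tau$, so this case also reduces to a statement in $W^{\text{aff}}$.

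Inside the Coxeter group $W^{\text{aff}}$, I would prove the left-multiplication inequalities from the subword characterization of Bruhat order by a case split on the signs of $\ell(sv') - \ell(v')$ and $\ell(sw') - \ell(w')$. Concretely, fix a reduced expression for $w'$ together with a reduced subexpression for $v'$. If $sw' > w'$, prepending $s$ produces a reduced expression for $sw'$ inside which both the original subword (for $v'$) and its extension by the leading $s$ (for $sv'$) appear as reduced subwords. If $sw' < w'$, the Exchange Condition provides a reduced expression for $w'$ starting with $s$, and a parallel subword-tracking argument produces reduced subwords for $v'$ or $sv'$ inside $sw'$. The right-multiplication half follows either by the mirror argument with reduced expressions ending in a specified generator, or by applying the left-multiplication version to $v'^{-1} \leq w'^{-1}$ and using that inversion preserves the Bruhat order on Coxeter groups.

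The main obstacle is not any individual step but the four-way bookkeeping over the signs of $sx \gtrless x$ and $sy \gtrless y$ (and symmetrically on the right); the Coxeter-theoretic ingredients (subword characterization and Exchange Condition) are entirely classical, so I expect the only real risk of error to be losing track of a case.
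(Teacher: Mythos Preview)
Your proposal is correct and takes essentially the same approach as the paper: both reduce from $W = W^{\text{aff}} \rtimes \Omega$ to $W^{\text{aff}}$ via the definition of the Bruhat order on $W$, and then appeal to the classical Coxeter-group statement. The only difference is that the paper simply cites the Coxeter case (Corollary~2.5 of \cite{gl03}) as well-known, whereas you sketch its proof via the subword criterion and the Exchange Condition; your handling of the right-multiplication reduction via $s' = \tau s \tau^{-1} \in S^{\text{aff}}$ is a nice explicit touch that the paper leaves implicit.
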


\begin{proof}
When $\Omega$ is trivial, this is all well-known: see Corollary $2.5$ in \cite{gl03}. The more general statement is immediate by definition of the Bruhat order on $W$ because $W=W^{aff}\rtimes\Omega$.
\end{proof}

Let $Z$ be an arbitrary abelian group, and let $W(1)$ be an extension of $W$ by $Z$ given by the short exact sequence $$1\rightarrow Z\rightarrow W(1)\stackrel{\pi}\rightarrow W\rightarrow 1.$$
We denote by $X(1)$ the inverse image in $W(1)$ of a subset $X\subseteq W$. Then $W^{\text{aff}}(1),\Lambda(1)$ are normal in $W(1)$ and $W(1)=W^{\text{aff}}(1)\Omega(1)=\Lambda(1)W_0(1),Z=W^{\text{aff}}(1)\cap\Omega(1)=\Lambda(1)\cap W_0(1)$. The length function on $W$ inflates to a length function on $W(1)$, still denoted by $\ell$, such that $\ell(\tilde{w})=\ell(\pi(\tilde{w}))$ for $\tilde{w}\in W(1)$.

Let $R$ be a commutative ring. For $\tilde{w}\in W(1)$ and $t\in Z$, $\tilde{w}(t)=\tilde{w}t\tilde{w}^{-1}$ depends only on the image of $\tilde{w}$ in $W$ because $Z$ is commutative. By linearity the conjugation defines an action 
$$(\tilde{w},c)\mapsto \tilde{w}(c): W(1)\times R[Z]\rightarrow R[Z]$$
of $W(1)$ on $R[Z]$ factoring through the map $\pi: W(1)\rightarrow W$.

We recall the definition of the generic algebra $\mathcal{H}_R(q_{\tilde{s}},c_{\tilde{s}})$ introduced in \cite{vm1}.
\begin{thm}
Let $(q_{\tilde{s}},c_{\tilde{s}})\in R\times R[Z]$ for all $\tilde{s}\in S^{\text{aff}}(1)$. Suppose
\begin{itemize}
\item
$q_{\tilde{s}}=q_{\tilde{s}t}=q_{\tilde{s'}}$,
\item
$c_{\tilde{s}t}=c_{\tilde{s}}t$ and $\tilde{w}(c_{\tilde{s}})=c_{\tilde{w}\tilde{s}\tilde{w}^{-1}}=c_{\tilde{s'}},$
\end{itemize}
where $t\in Z,\tilde{w}\in W(1),\tilde{s},\tilde{s'}\in S^{\text{aff}}(1)$ and $\tilde{s'}=\tilde{w}\tilde{s}\tilde{w}^{-1}$.

Then the free $R$-module $\mathcal{H}_R(q_{\tilde{s}},c_{\tilde{s}})$ of basis $(T_{\tilde{w}})_{\tilde{w}\in W(1)}$ admits a unique $R$-algebra structure satisfying
\begin{itemize}
\item
the braid relations: $T_{\tilde{w}}T_{\tilde{w'}}=T_{\tilde{w}\tilde{w'}}$ for $\tilde{w},\tilde{w'}\in W(1), \ell(\tilde{w})+\ell(\tilde{w'})=\ell(\tilde{w}\tilde{w'})$,
\item
the quadratic relations: $T_{\tilde{s}}^2=q_{\tilde{s}}T_{\tilde{s}^2}+c_{\tilde{s}}T_{\tilde{s}}$ for $\tilde{s}\in S^{\text{aff}}(1)$,
\end{itemize}
where $c_{\tilde{s}}=\sum_{t\in Z}c_{\tilde{s}}(t)t\in R[Z]$ is identified with $\sum_{t\in Z}c_{\tilde{s}}(t)T_t$.
\end{thm}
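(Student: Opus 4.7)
The plan is the classical two-step argument: first establish uniqueness from the braid and quadratic relations, then construct the algebra by realizing it as a subalgebra of $\mathrm{End}_R(H)$, where $H$ is the free $R$-module with basis $(e_{\tilde{w}})_{\tilde{w}\in W(1)}$.

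For uniqueness, fix $\tilde{w}\in W(1)$ and pick a lift $\tilde{w}=\tilde{s}_1\cdots\tilde{s}_n\tilde{u}$ of a reduced decomposition of $\pi(\tilde{w})$, with $\tilde{s}_i\in S^{\text{aff}}(1)$ and $\tilde{u}\in\Omega(1)$. The braid relation forces $T_{\tilde{w}}=T_{\tilde{s}_1}\cdots T_{\tilde{s}_n}T_{\tilde{u}}$, so the multiplication is completely determined by products of the form $T_{\tilde{s}}T_{\tilde{w}'}$. When $\ell(\tilde{s}\tilde{w}')>\ell(\tilde{w}')$ the braid relation gives $T_{\tilde{s}\tilde{w}'}$; when $\ell(\tilde{s}\tilde{w}')<\ell(\tilde{w}')$, write $\tilde{w}'=\tilde{s}\tilde{w}''$ and combine the braid and quadratic relations to obtain $T_{\tilde{s}}T_{\tilde{w}'}=q_{\tilde{s}}T_{\tilde{s}\tilde{w}'}+c_{\tilde{s}}T_{\tilde{w}'}$. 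Induction on $\ell(\tilde{w})$ then shows the whole algebra structure is forced.

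For existence, define $R$-linear operators on $H$ by $L_t(e_{\tilde{w}})=e_{t\tilde{w}}$ for $t\in Z$, $L_{\tilde{u}}(e_{\tilde{w}})=e_{\tilde{u}\tilde{w}}$ for $\tilde{u}\in\Omega(1)$, and for $\tilde{s}\in S^{\text{aff}}(1)$,
$$L_{\tilde{s}}(e_{\tilde{w}})=\begin{cases} e_{\tilde{s}\tilde{w}}, & \ell(\tilde{s}\tilde{w})>\ell(\tilde{w}), \\ q_{\tilde{s}}\,e_{\tilde{s}\tilde{w}}+\sum_{t\in Z}c_{\tilde{s}}(t)\,e_{t\tilde{w}}, & \ell(\tilde{s}\tilde{w})<\ell(\tilde{w}). \end{cases}$$
A direct case analysis (splitting by whether $\ell(\tilde{s}\tilde{w})$ is larger or smaller than $\ell(\tilde{w})$) yields $L_{\tilde{s}}^2=q_{\tilde{s}}L_{\tilde{s}^2}+c_{\tilde{s}}L_{\tilde{s}}$, where the compatibility $c_{\tilde{s}t}=c_{\tilde{s}}t$ is used to absorb the residual $Z$-twist. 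Setting $L_{\tilde{w}}:=L_{\tilde{s}_1}\cdots L_{\tilde{s}_n}L_{\tilde{u}}$ for a reduced lift of $\tilde{w}$, $R$-linear independence of $(L_{\tilde{w}})_{\tilde{w}\in W(1)}$ follows by evaluating at $e_1$, which recovers $e_{\tilde{w}}$ plus terms supported at strictly shorter elements.

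The hard part will be showing that $L_{\tilde{w}}$ is independent of the chosen reduced decomposition, equivalently that the braid identity $L_{\tilde{s}}L_{\tilde{s}'}L_{\tilde{s}}\cdots=L_{\tilde{s}'}L_{\tilde{s}}L_{\tilde{s}'}\cdots$ (of length equal to the braid order of the pair) holds on $H$ for all $\tilde{s},\tilde{s}'\in S^{\text{aff}}(1)$. By Matsumoto's theorem for the Coxeter group $W^{\text{aff}}$, every equivalence between reduced expressions is generated by such rank-two moves, so this is the only nontrivial check. Unrolling the dihedral computation produces $q$- and $c$-indexed coefficients whose labels have been conjugated by intermediate elements of the dihedral subgroup; the hypotheses $q_{\tilde{s}}=q_{\tilde{s}'}$ and $\tilde{w}(c_{\tilde{s}})=c_{\tilde{w}\tilde{s}\tilde{w}^{-1}}$ are precisely what is needed for the conjugated coefficients on the two sides to coincide. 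Once independence is proven, declaring $T_{\tilde{w}}:=L_{\tilde{w}}$ endows $H$ with the desired $R$-algebra structure satisfying both presentations.
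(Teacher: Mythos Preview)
The paper does not actually prove this theorem: it is stated in Section~2 as a result \emph{recalled} from Vign\'eras~\cite{vm1}, with no proof given. So there is no ``paper's own proof'' to compare against.

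That said, your outline is the standard Iwahori--Matsumoto style argument and is essentially the approach taken in~\cite{vm1}. The uniqueness half is fine. For existence, your plan is correct in shape, but be aware that the dihedral braid check you flag as ``the hard part'' is genuinely intricate in this generality: one must track not only the $q$- and $c$-coefficients but also the $Z$-twists coming from the fact that $\tilde{s}^2\in Z$ need not be trivial, and the braid relation in $W(1)$ reads $\tilde{s}\tilde{s}'\tilde{s}\cdots=t\,\tilde{s}'\tilde{s}\tilde{s}'\cdots$ for some $t\in Z$. Your sketch acknowledges the conjugation hypotheses are ``precisely what is needed'' but does not carry out the computation; in a full proof this is where most of the work lies. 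A minor point: your claim that $L_{\tilde{w}}(e_1)=e_{\tilde{w}}+(\text{shorter terms})$ is slightly off, since for a reduced lift one has $L_{\tilde{w}}(e_1)=e_{\tilde{w}}$ exactly (no lower terms appear because each $L_{\tilde{s}_i}$ acts in the length-increasing branch), but this only strengthens the linear independence argument.
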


The algebra $\mathcal{H}_R(q_{\tilde{s}},c_{\tilde{s}})$ is called the $R$-algebra of $W(1)$ with parameters $(q_{\tilde{s}},c_{\tilde{s}})$.

The basis of the center of $\mathcal{H}_R(q_{\tilde{s}},c_{\tilde{s}})$ given in \cite{vm2} can be very complicated when written explicitly by Iwahori-Matsumoto presentation. But when $q_{\tilde{s}}=0$, we can write out the basis explicitly. In this paper, all our discussions are under the condition of $q_{\tilde{s}}=0,\forall \tilde{s}\in S^{\text{aff}}(1)$, that is, the algebra $\mathcal{H}_R(0,c_{\tilde{s}})$. In this case, the quadratic relations become $T_{\tilde{s}}^2=c_{\tilde{s}}T_{\tilde{s}}$.

For convenience, we define a $W(1)$-action on $\mathcal{H}_R(0,c_{\tilde{s}})$ given by $\tilde{w}\bullet T_{\tilde{w'}}=T_{\tilde{w} \tilde{w'} \tilde{w}^{-1}}$ for any $\tilde{w}, \tilde{w'}\in W(1)$, extended linearly to all elements in $\mathcal{H}_R(0,c_{\tilde{s}})$.

The following lemma is useful in later discussion:
\begin{lem}\label{1}
Let $\tilde{w}_1,\tilde{w}_2,\tilde{v}_1,\tilde{v}_2\in W(1), \tilde{s}_1,\tilde{s}_2\in S^{\text{aff}}(1)$, and suppose $\tilde{w}_1 \tilde{s}_1 \tilde{v}_1=\tilde{w}_2 \tilde{s}_2 \tilde{v}_2$ and $\pi(\tilde{w}_1 \tilde{v}_1)=\pi(\tilde{w}_2 \tilde{v}_2)$. Then $\tilde{w}_1 c_{\tilde{s}_1} \tilde{v}_1=\tilde{w}_2 c_{\tilde{s}_2}\tilde{v}_2$.
\end{lem}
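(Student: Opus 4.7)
The plan is to collapse everything into a single two-sided relation in $R[W(1)]$ (viewed inside $\mathcal{H}_R(0,c_{\tilde s})$ via $T_t=t$), using only the two defining properties of the parameters $c_{\tilde s}$. Set
\[
\tilde a := \tilde w_2^{-1}\tilde w_1, \qquad \tilde b := \tilde v_1\tilde v_2^{-1}.
\]
The first hypothesis $\tilde w_1\tilde s_1\tilde v_1 = \tilde w_2\tilde s_2\tilde v_2$ rewrites as $\tilde a\,\tilde s_1\,\tilde b = \tilde s_2$, while the second hypothesis $\pi(\tilde w_1\tilde v_1)=\pi(\tilde w_2\tilde v_2)$ gives $\pi(\tilde a\tilde b)=1$, so $t:=\tilde a\tilde b\in Z$. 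Substituting $\tilde b=\tilde a^{-1}t$ into the first relation yields the clean decomposition $\tilde s_2=(\tilde a\tilde s_1\tilde a^{-1})\,t$.

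Before invoking the axioms on $c_{\tilde s}$ I note that $\tilde a\tilde s_1\tilde a^{-1}$ does lie in $S^{\text{aff}}(1)$: its image under $\pi$ equals $\pi(\tilde s_2)\pi(t)^{-1}=\pi(\tilde s_2)$, which is a simple affine reflection. Now I apply the two assumptions on the parameters. The rule $c_{\tilde s t}=c_{\tilde s}t$ with $\tilde s=\tilde a\tilde s_1\tilde a^{-1}$ gives
\[
c_{\tilde s_2}=c_{(\tilde a\tilde s_1\tilde a^{-1})t}=c_{\tilde a\tilde s_1\tilde a^{-1}}\,t,
\]
and the conjugation rule $\tilde w(c_{\tilde s})=c_{\tilde w\tilde s\tilde w^{-1}}$ with $\tilde w=\tilde a$, $\tilde s=\tilde s_1$ gives $c_{\tilde a\tilde s_1\tilde a^{-1}}=\tilde a\,c_{\tilde s_1}\,\tilde a^{-1}$. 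Combining these two identities yields
\[
c_{\tilde s_2}=\tilde a\,c_{\tilde s_1}\,\tilde a^{-1}\,t=\tilde a\,c_{\tilde s_1}\,\tilde b.
\]

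To conclude, I multiply on the left by $\tilde w_2$ and on the right by $\tilde v_2$. Since $\tilde w_2\tilde a=\tilde w_1$ and $\tilde b\tilde v_2=\tilde v_1$ by construction of $\tilde a,\tilde b$, I obtain
\[
\tilde w_2\,c_{\tilde s_2}\,\tilde v_2=\tilde w_2\tilde a\,c_{\tilde s_1}\,\tilde b\tilde v_2=\tilde w_1\,c_{\tilde s_1}\,\tilde v_1,
\]
as required. There is no real obstacle here: the whole argument is structural, and the only mildly non-automatic point is the observation that the two hypotheses combine into a single element $t=\tilde a\tilde b\in Z$ that exactly absorbs the discrepancy between the conjugation axiom and the multiplicative axiom for $c_{\tilde s}$.
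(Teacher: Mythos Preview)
Your proof is correct and follows essentially the same approach as the paper's: both reduce the two hypotheses to the single relation that one $\tilde s_i$ is a $W(1)$-conjugate of the other times a central element, then apply the two parameter axioms $c_{\tilde s t}=c_{\tilde s}t$ and $\tilde w(c_{\tilde s})=c_{\tilde w\tilde s\tilde w^{-1}}$. The only cosmetic difference is that the paper expresses $\tilde s_1$ in terms of $\tilde s_2$ via conjugation by $\tilde v_1\tilde v_2^{-1}$, whereas you express $\tilde s_2$ in terms of $\tilde s_1$ via conjugation by $\tilde a=\tilde w_2^{-1}\tilde w_1$.
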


\begin{proof}
Since $\pi(\tilde{w}_1 \tilde{v}_1)=\pi(\tilde{w}_2 \tilde{v}_2)$, we have $\tilde{w}_2 \tilde{v}_2=\tilde{w}_1 t \tilde{v}_1$ for some $t\in Z$, hence $\tilde{w}_1^{-1} \tilde{w}_2=t \tilde{v}_1 \tilde{v}_2^{-1}$. Then $\tilde{s}_1=\tilde{w}_1^{-1}\tilde{w}_2 \tilde{s}_2 \tilde{v}_2 \tilde{v}_1^{-1}=t(\tilde{v}_1 \tilde{v}_2^{-1})\tilde{s}_2(\tilde{v}_1 \tilde{v}_2^{-1})^{-1}$, so $c_{\tilde{s}_1}=t(\tilde{v}_1 \tilde{v}_2^{-1})c_{\tilde{s}_2}(\tilde{v}_1 \tilde{v}_2^{-1})^{-1}=\tilde{w}_1^{-1} \tilde{w}_2 c_{\tilde{s}_2} \tilde{v}_2 \tilde{v}_1^{-1}$, i.e., $\tilde{w}_1 c_{\tilde{s}_1} \tilde{v}_1=\tilde{w}_2 c_{\tilde{s}_2} \tilde{v}_2$.
\end{proof}

\section{A New Operator}

In this section, we will define an operator $r_{v,w}$ for any pair $(v,w)\in W\times W$ with $v\leq w$. This operator is the main ingredient of this paper.

For every $s\in S^{\text{aff}}$, pick a lifing $\tilde{s}$ in $S^{\text{aff}}(1)$, and for every $\tau\in\Omega$, pick a lifting $\tilde{\tau}$ in $\Omega(1)$. Let $w\in W$ with $\ell(w)=n$ and $\underline{w}=s_{i_1}s_{i_2}\dotsm s_{i_n}\tau$ be a reduced expression of $w$. A subexpression of $\underline{w}$ is a word $s_{i_1}^{e_{i_1}}s_{i_2}^{e_{i_2}}\dotsm s_{i_n}^{e_{i_n}}\tau$ with $(e_{i_1},e_{i_2},\dotsm,e_{i_n})\in\{0,1\}^n$. A subexpression is called non-decreasing if $\ell(s_{i_1}^{e_{i_1}}s_{i_2}^{e_{i_2}}\dotsm s_{i_n}^{e_{i_n}}\tau)=\sum_{k=1}^n e_{i_k}$. Let $v\leq w$, then there exists $(e_{i_1},e_{i_2},\dotsm,e_{i_n})\in\{0,1\}^n$ such that $\underline{v}_{\underline{w}}=s_{i_1}^{e_1}s_{i_2}^{e_2}\dotsm s_{i_n}^{e_n}\tau$ equals $v$ and is also a non-decreasing subexpression of $\underline{w}$. Let $\tilde{w}\in W(1)$ be a lifting of $w$, then $\tilde{w}$ has an expression $\underline{\tilde{w}}=t\tilde{s}_{i_1}\tilde{s}_{i_2}\dotsm\tilde{s}_{i_n}\tilde{\tau}$ for some $t\in Z$. Then the operator,
$$r_{\underline{v}_{\underline{w}}}: \bigoplus_{\tilde{w}\in W(1),\pi(\tilde{w})=w} RT_{\tilde{w}}\longrightarrow\bigoplus_{\tilde{v}\in W(1),\pi(\tilde{v})=v} RT_{\tilde{v}}$$
is defined term by term and extended linearly, where
$$r_{\underline{v}_{\underline{w}}}(T_{\tilde{w}})=T_t T_{\tilde{s}_{i_1}}^{e_1}(-c_{\tilde{s}_{i_1}})^{1-e_1}T_{\tilde{s}_{i_2}}^{e_2}(-c_{\tilde{s}_{i_2}})^{1-e_2}\dotsm T_{\tilde{s}_{i_n}}^{e_n}(-c_{\tilde{s}_{i_n}})^{1-e_n}T_{\tilde{\tau}}.$$

In other words, we fix $T_{\tilde{s}_{i_k}}$'s for $e_k=1$, and replace all the other $T_{\tilde{s}_{i_k}}$'s with $-c_{\tilde{s}_{i_k}}$'s. It is easy to see that $r_{\underline{v}_{\underline{w}}}$ is independent of choice of liftings.

\begin{example}
In $SL_3$ case, $W$ is generated by three elements $s_0,s_1,s_2$ with relations $s_i^2=1$ for all $i$ and $s_is_js_i=s_js_is_j$ if $i\neq j$. Let $\tilde{s}_0,\tilde{s}_1,\tilde{s}_2$ be liftings of $s_0,s_1,s_2$ respectively. Let $\underline{w}=s_0s_1s_2s_0s_1s_2$, $\underline{\tilde{w}}=t\tilde{s}_0\tilde{s}_1\tilde{s}_2\tilde{s}_0\tilde{s}_1\tilde{s}_2$ for some $t\in Z$. Let $(e_1,e_2,e_3,e_4,e_5,e_6)=(1,1,1,0,1,0)$ so that $\underline{v}_{\underline{w}}=s_0s_1s_21s_11$. Then $$r_{\underline{v}_{\underline{w}}}(T_{\tilde{w}})=T_tT_{\tilde{s}_0}T_{\tilde{s}_1}T_{\tilde{s}_2}(-c_{\tilde{s}_0})T_{\tilde{s}_1}(-c_{\tilde{s}_2})=T_tT_{\tilde{s}_0}T_{\tilde{s}_1}T_{\tilde{s}_2}c_{\tilde{s}_0}T_{\tilde{s}_1}c_{\tilde{s}_2}.$$
\end{example}

A priori, $r_{\underline{v}_{\underline{w}}}$ depends not only on the choice of reduced expression $\underline{w}$ but also on the choice of non-decreasing subexpression $\underline{v}_{\underline{w}}$. In the following part, we will show that, in fact, $r_{\underline{v}_{\underline{w}}}$ is independent of these choices, so the notation $r_{v,w}$ makes sense.

\begin{lem}
Let $w\in W$ with $\ell(w)=n$ and let $\underline{w}=s_{i_1}s_{i_2}\dotsm s_{i_n}\tau$ be a reduced expression of $w$. Let $\tilde{w}\in W(1)$ be a lifting of $w$ with $\underline{\tilde{w}}=t\tilde{s}_{i_1}\tilde{s}_{i_2}\dotsm\tilde{s}_{i_n}\tilde{\tau}$ for some $t\in Z$. Let $v\leq w$, and let $\underline{v}_{\underline{w}}=s_{i_1}^{e_1}s_{i_2}^{e_2}\dotsm s_{i_n}^{e_n}\tau$ and $\underline{v}'_{\underline{w}}=s_{i_1}^{f_1},s_{i_2}^{f_2}\dotsm s_{i_n}^{f_n}\tau$ be two non-decreasing subexpressions of $\underline{w}$ which both equal $v$. Then $r_{\underline{v}_{\underline{w}}}(T_{\tilde{w}})=r_{\underline{v}'_{\underline{w}}}(T_{\tilde{w}})$.
\end{lem}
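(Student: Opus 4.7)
The plan is to induct on $n = \ell(w)$. Since both $r_{\underline{v}_{\underline{w}}}(T_{\tilde{w}})$ and $r_{\underline{v}'_{\underline{w}}}(T_{\tilde{w}})$ end with the common factor $T_{\tilde{\tau}}$, it suffices to compare the products formed by positions $1, \ldots, n$. The base case $n = 0$ is trivial since only the empty subexpression exists. For the inductive step, I would do case analysis on the comparison of $e_n$ and $f_n$.

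\textbf{Easy cases ($e_n = f_n$).} Both products share a common rightmost factor in position $n$: either $T_{\tilde{s}_{i_n}}$ (both equal $1$) or $-c_{\tilde{s}_{i_n}}$ (both equal $0$). A direct length check shows that the truncations $(e_1,\ldots,e_{n-1})$ and $(f_1,\ldots,f_{n-1})$ are both non-decreasing subexpressions of the reduced word $s_{i_1}\cdots s_{i_{n-1}}$ realizing the same element ($vs_{i_n}$ in the first subcase, $v$ in the second). The inductive hypothesis applied to this shorter reduced word identifies the truncated operator values, and right-multiplication by the common factor lifts the equality to the original products.

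\textbf{Hard case ($e_n \ne f_n$, say $e_n = 1$, $f_n = 0$).} Here the rightmost factors differ and the prefix subexpressions realize different elements ($vs_{i_n}$ for $e$ versus $v$ for $f$). My plan is to expand each $c_{\tilde{s}_k} = \sum_{t \in Z} c_{\tilde{s}_k}(t)\, t$ and use the braid relation $T_{\tilde{s}}T_{t'} = T_{\tilde{s}t'}$ to collapse each monomial in the full expansion to an $R$-multiple of a single basis element $T_{\tilde{y}}$ with $\pi(\tilde{y}) = v$. This presents both $r_e(T_{\tilde{w}})$ and $r_f(T_{\tilde{w}})$ as $R$-linear combinations of basis vectors $T_{\tilde{v}}$ with $\tilde{v} \in \pi^{-1}(v)$, and I would compare coefficients lift-by-lift. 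To match coefficients, Lemma \ref{1} is the central tool: its hypothesis that the two triples project to the same element of $W$ is automatic from the assumption that both subexpressions equal $v$, while the hypothesis $\tilde{w}_1\tilde{s}_1\tilde{v}_1 = \tilde{w}_2\tilde{s}_2\tilde{v}_2$ is arranged by reading off the two monomials and using that $Z$ is normal in $W(1)$.

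\textbf{Main obstacle.} The hard case is where the real work lies. The index sets of the two expansions differ (one sums over tuples $(t_k)$ indexed by positions where $e_k = 0$, the other by positions where $f_k = 0$), so the matching of monomials requires an explicit bijection together with a verification that Lemma \ref{1}'s group-level hypothesis holds after commuting the various $t_k$'s past the $\tilde{s}_{i_j}$'s. I expect the bookkeeping needed to pair monomials correctly, particularly when the discrepancy positions are spread throughout the word and interact nontrivially with the braid structure of $\underline{w}$, to be the bulk of the technical effort; everything else reduces either to routine induction or to a direct invocation of Lemma \ref{1}.
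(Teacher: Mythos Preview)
Your easy cases are essentially the paper's argument (the paper peels from the left rather than the right and inducts on $\ell(w)+\ell(v)$, but that is cosmetic). The gap is in your hard case: you have described a plan---expand all the $c_{\tilde{s}}$'s and match monomials via Lemma~\ref{1}---but you have not carried it out, and as you yourself note, the obstacle is real. When $e_n\neq f_n$, the two subexpressions may differ at \emph{many} positions, and there is no evident bijection between the tuples $(t_k)_{k:e_k=0}$ and $(t_k)_{k:f_k=0}$ that makes Lemma~\ref{1} apply term-by-term. Lemma~\ref{1} compares two products that differ by swapping a single $\tilde{s}$ against a single $c_{\tilde{s}}$; it does not directly handle a simultaneous multi-position discrepancy, and you have given no mechanism to reduce to that situation.

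The missing idea is the \emph{strong exchange condition}. In the paper's hard case (say $e_1=1$, $f_1=0$), one observes that $s_{i_1}v<v$ and applies strong exchange to the reduced expression of $v$ coming from the $f$-subexpression: this singles out one index $j_d$ with $s_{i_1}v = s_{i_{j_1}}\cdots\widehat{s_{i_{j_d}}}\cdots s_{i_{j_m}}$. One then invokes the inductive hypothesis on the shorter word $s_{i_2}\cdots s_{i_n}\tau$ (both $\ell(w)$ and $\ell(v)$ drop, so $\ell(w)+\ell(v)$ drops) to replace the $e$-subexpression by one that agrees with the $f$-subexpression at \emph{every} position except $1$ and $j_d$. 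At that point the two products differ only by a single $T$-versus-$c$ swap at two positions, and a single clean application of Lemma~\ref{1} finishes. Your proposal lacks this reduction step, and without it the ``bookkeeping'' you anticipate does not organize itself.
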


\begin{proof}
We show this by induction on $l=\ell(w)+\ell(v)$.

If $l=0$, then $r_{\underline{v}_{\underline{w}}}(T_{\tilde{w}})=r_{\underline{v}'_{\underline{w}}}(T_{\tilde{w}})=T_{t\tilde{\tau}}$.

If $l=1$, then $\ell(w)=1$ and $\ell(v)=0$, so $r_{\underline{v}_{\underline{w}}}(T_{\tilde{w}})=r_{\underline{v}'_{\underline{w}}}(T_{\tilde{w}})=T_t(-c_{\tilde{s}_{i_1}})T_{\tilde{\tau}}$.

Now suppose that the statement is correct for $l<k$, and we consider the case when $l=k$.

\begin{itemize}
\item
If $e_1=f_1$, then by induction, the statement is correct.
\item
If $e_1\neq f_1$, then without loss of generality, we may assume that $e_1=1,f_1=0$, then
\begin{align}\nonumber
r_{\underline{v}_{\underline{w}}}(T_{\tilde{w}})
&=T_tT_{\tilde{s}_{i_1}}T_{\tilde{s}_{i_2}}^{e_2}(-c_{\tilde{s}_{i_2}})^{1-e_2}\dotsm T_{\tilde{s}_{i_n}}^{e_n}(-c_{\tilde{s}_{i_n}})^{1-e_n}T_{\tilde{\tau}},\\
r_{\underline{v}'_{\underline{w}}}(T_{\tilde{w}})
&=T_t(-c_{\tilde{s}_{i_1}})T_{\tilde{s}_{i_2}}^{f_2}(-c_{\tilde{s}_{i_2}})^{1-f_2}\dotsm T_{\tilde{s}_{i_n}}^{f_n}(-c_{\tilde{s}_{i_n}})^{1-f_n}T_{\tilde{\tau}}.\nonumber
\end{align}

Let $\ell(v)=m$, then we may assume that $f_{i_j}=1$ for $j\in\{j_1,\dotsm,j_m\}\subseteq\{2,\dotsm,n\}$ and $f_{i_j}=0$ otherwise in subexpression $\underline{v}'_{\underline{w}}$. But $s_{i_1}v<v$, so by strong exchange condition, $s_{i_1}v=s_{i_{j_1}}\dotsm\widehat{s_{i_{j_d}}}\dotsm s_{i_{j_m}}$ for some $j_d$. Then by induction,
$$r_{\underline{v}_{\underline{w}}}(T_{\tilde{w}})=T_tT_{\tilde{s}_{i_1}}T_{\tilde{s}_{i_2}}^{e'_2}(-c_{\tilde{s}_{i_2}})^{1-e'_2}\dotsm T_{\tilde{s}_{i_n}}^{e'_n}(-c_{\tilde{s}_{i_n}})^{1-e'_n}T_{\tilde{\tau}},$$
where $e'_{i_j}=1$ for $j\in\{j_1,\dotsm,\widehat{j_d},\dotsm,j_m\}$ and $e'_{i_j}=0$ otherwise.

Now the only difference between $r_{\underline{v}_{\underline{w}}}(T_{\tilde{w}})$ and $r_{\underline{v}'_{\underline{w}}}(T_{\tilde{w}})$ is that $r_{\underline{v}_{\underline{w}}}(T_{\tilde{w}})$ has $T_{\tilde{s}_{i_1}}$ and $-c_{\tilde{s}_{i_{j_d}}}$ as factors in the first and $j_d$th position respectively, while $r_{\underline{v}'_{\underline{w}}}(T_{\tilde{w}})$ has $-c_{\tilde{s}_{i_1}}$ and $T_{\tilde{s}_{i_{j_d}}}$ as factors in the first and $j_d$th position respectively. Factors in all other positions are the same for $r_{\underline{v}_{\underline{w}}}(T_{\tilde{w}})$ and $r_{\underline{v}'_{\underline{w}}}(T_{\tilde{w}})$.

Since $c_{\tilde{s}}$ is just a $R$-linear combination of elements in $Z$, it suffices to show that
$$\tilde{s}_{i_1}t_1\tilde{s}_{i_{j_1}}t_2\tilde{s}_{i_{j_2}}\dotsm t_{j_d}c_{\tilde{s}_{i_{j_d}}}\dotsm t_m\tilde{s}_{i_{j_m}}=c_{\tilde{s}_{i_1}}t_1\tilde{s}_{i_{j_1}}t_2\tilde{s}_{i_{j_2}}\dotsm t_{j_d}\tilde{s}_{i_{j_d}}\dotsm t_m\tilde{s}_{i_{j_m}}$$
for any $m$-tuple $(t_1,\dotsm,t_m)\in Z^m$, which holds by Lemma \ref{1}.
\end{itemize}

This finishes the proof.
\end{proof}

This lemma tells us that $r_{\underline{v}_{\underline{w}}}$ is independent of the choice of non-decreasing subexpression $\underline{v}_{\underline{w}}$. So we can rewrite the operator as $r_{v,\underline{w}}$.

\begin{thm}
Let $w\in W$ with $\ell(w)=n$ and let $\underline{w}_1=s_{11}s_{12}\dotsm s_{1n}\tau$ and $\underline{w}_2=s_{21}s_{22}\dotsm s_{2n}\tau$ be two reduced expressions of $w$. Let $\tilde{w}\in W(1)$ be a lifting of $w$ with two corresponding expressions $\underline{\tilde{w}}_1=t_1\tilde{s}_{11}\tilde{s}_{12}\dotsm \tilde{s}_{1n}\tilde{\tau}$ and $\underline{\tilde{w}}_2=t_2\tilde{s}_{21}\tilde{s}_{22}\dotsm\tilde{s}_{2n}\tilde{\tau}$ for some $t_1,t_2\in Z$ respectively. Let $v\leq w$ with $\ell(v)=m$, then $r_{v,\underline{w}_1}(T_{\tilde{w}})=r_{v,\underline{w}_2}(T_{\tilde{w}})$.
\end{thm}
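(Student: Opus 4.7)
The plan is to invoke Matsumoto's theorem to reduce the claim to a single braid move, use the preceding lemma to pick compatible non-decreasing subexpressions, and then verify the resulting middle identity via Lemma \ref{1}.

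Since $W=W^{\mathrm{aff}}\rtimes\Omega$ and the $\Omega$-component $\tau$ in any reduced expression of $w$ is uniquely determined, Matsumoto's theorem (applied within $W^{\mathrm{aff}}$) links $\underline{w}_1$ and $\underline{w}_2$ by a finite sequence of braid moves of the form $\underbrace{sts\cdots}_{m_{st}}\leftrightarrow\underbrace{tst\cdots}_{m_{st}}$. It therefore suffices to treat the case in which $\underline{w}_1$ and $\underline{w}_2$ differ by exactly one such move, say
$$\underline{w}_1=\underline{u}\cdot\underbrace{sts\cdots}_{m}\cdot\underline{u}'\tau, \qquad \underline{w}_2=\underline{u}\cdot\underbrace{tst\cdots}_{m}\cdot\underline{u}'\tau,$$
with common prefix $\underline{u}$, common suffix $\underline{u}'\tau$, and $m=m_{st}$.

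By the preceding lemma, $r_{v,\underline{w}_i}(T_{\tilde w})$ does not depend on the chosen non-decreasing subexpression. I would exploit this freedom to pick subexpressions of $\underline{w}_1$ and $\underline{w}_2$ that agree on $\underline{u}$ and $\underline{u}'$ and yield the same element $v_0\in\langle s,t\rangle$ in the middle block. Such compatible middle subwords always exist: in the rank-two dihedral subgroup $\langle s,t\rangle$ every $v_0\le w_0:=\underbrace{sts\cdots}_m=\underbrace{tst\cdots}_m$ has a non-decreasing subword in each of the two alternating expressions, by the subword characterization of Bruhat order in the dihedral group. With these compatible subexpressions, both $r_{v,\underline{w}_i}(T_{\tilde w})$ take the form $T_{t_i}\cdot P\cdot M_i\cdot P'\cdot T_{\tilde\tau}$, where the prefix contribution $P$ and suffix contribution $P'$ are identical for $i=1,2$, and $M_i$ is the middle contribution depending on the $i$-th braid word.

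The identity $\underline{\tilde w}_1=\underline{\tilde w}_2$ in $W(1)$ yields a braid-level relation $\tilde s\tilde t\tilde s\cdots=z_0\tilde t\tilde s\tilde t\cdots$ for some $z_0\in Z$, which in particular pins down the ratio $t_1t_2^{-1}$. Cancelling the shared $P$, $P'$, and $T_{\tilde\tau}$, the theorem collapses to the middle identity $T_{t_1}M_1=T_{t_2}M_2$. The main obstacle is to establish this identity, and the strategy mirrors the proof of the preceding lemma: I would produce a sequence of elementary swaps, each exchanging a $T_{\tilde s_\bullet}$ factor in one middle position with a $-c_{\tilde s_\bullet}$ factor in another, each swap justified by Lemma \ref{1} (which furnishes the required identity $\tilde w_1c_{\tilde s_1}\tilde v_1=\tilde w_2c_{\tilde s_2}\tilde v_2$ whenever the total product in $W(1)$ and its projection to $W$ are preserved). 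Iterating these swaps converts the pattern of $T$'s and $c$'s coming from $\underline{w}_1$ into the one coming from $\underline{w}_2$, with the accumulated $Z$-correction $z_0$ accounting for the passage from $t_1$ to $t_2$.
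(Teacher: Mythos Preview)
Your approach is the same as the paper's: Matsumoto reduces to a single braid move, the preceding lemma lets you pick convenient subexpressions, and everything comes down to a dihedral middle identity verified via Lemma~\ref{1}. Your treatment of the prefix/suffix compatibility is in fact more careful than the paper, which simply writes ``we may assume'' the whole word is the braid block.

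The one place your proposal is thin is the description of the final step. Saying you will ``exchange a $T_{\tilde s_\bullet}$ factor in one middle position with a $-c_{\tilde s_\bullet}$ factor in another'' is not the right mechanism here: the two braid words $\underbrace{\tilde s_\alpha\tilde s_\beta\cdots}_{n}$ and $\underbrace{\tilde s_\beta\tilde s_\alpha\cdots}_{n}$ carry \emph{different letters} at each position, so the passage from $M_1$ to $M_2$ is not a matter of repositioning $T$'s and $c$'s within a fixed word (that is precisely what the preceding lemma already handled). What the paper actually does is fix the subexpressions (the first $m$ letters in $\underline m_1$, letters $2$ through $m{+}1$ in $\underline m_2$), reduce to an identity in $R[W(1)]$, rewrite the leading $tc_{\tilde s_\beta}$ using the braid relation as a conjugate, and then iteratively apply the special case $\tilde s_\alpha t'\tilde s_\alpha^{-1}c_{\tilde s_\alpha}=c_{\tilde s_\alpha}t'$ of Lemma~\ref{1} to peel one $c$ factor at a time from the front to the back. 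Your instinct that Lemma~\ref{1} is the engine is correct, but the concrete move is this conjugation cascade, not a $T\leftrightarrow c$ swap sequence.
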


\begin{proof}
Since $\underline{w}_1$ and $\underline{w}_2$ are two reduced expressions of $w$, then by Theorem $1.9$ in \cite{gl03} there exists a sequence
$$\underline{w}_1=(\underline{w})_1,(\underline{w})_2,...,(\underline{w})_d=\underline{w}_2$$
of reduced expressions of $w$ such that $(\underline{w})_i$ and $(\underline{w})_{i+1}$ differ only by a braid relation. So without loss of generality, we may assume that $\underline{w}_1$ and $\underline{w}_2$ differ only by a braid relation, and even more we may assume $n,m$ are both even and other cases for $n,m$ follow by similar proofs. Then
\begin{align}\nonumber
\underline{\tilde{w}}_1
&=\underbrace{\tilde{s}_{\alpha}\tilde{s}_{\beta}\dotsm\tilde{s}_{\alpha}\tilde{s}_{\beta}}_{n},\\\nonumber
\underline{\tilde{w}}_2
&=t\underbrace{\tilde{s}_{\beta}\tilde{s}_{\alpha}\dotsm\tilde{s}_{\beta}\tilde{s}_{\alpha}}_{n},\\\nonumber
v&=\underbrace{s_{\alpha}s_{\beta}\dotsm s_{\alpha}s_{\beta}}_m.\nonumber
\end{align}
for some $t\in Z$. Therefore,
\begin{align}\nonumber
r_{v,\underline{w}_1}(T_{\tilde{w}})
&=\underbrace{T_{\tilde{s}_{\alpha}}\dotsm T_{\tilde{s}_{\beta}}}_{m}\underbrace{(-c_{\tilde{s}_{\alpha}})\dotsm(-c_{\tilde{s}_{\beta}})}_{n-m}\\\nonumber
&=\underbrace{T_{\tilde{s}_{\alpha}}\dotsm T_{\tilde{s}_{\beta}}}_{m}\underbrace{c_{\tilde{s}_{\alpha}}\dotsm c_{\tilde{s}_{\beta}}}_{n-m},\\\nonumber
r_{v,\underline{w}_2}(T_{\tilde{w}})
&=T_t(-c_{\tilde{s}_{\beta}})\underbrace{T_{\tilde{s}_{\alpha}}\dotsm T_{\tilde{s}_{\beta}}}_{m}\underbrace{(-c_{\tilde{s}_{\alpha}})\dotsm(-c_{\tilde{s}_{\alpha}})}_{n-m-1}\\\nonumber
&=T_tc_{\tilde{s}_{\beta}}\underbrace{T_{\tilde{s}_{\alpha}}\dotsm T_{\tilde{s}_{\beta}}}_{m}\underbrace{c_{\tilde{s}_{\alpha}}\dotsm c_{\tilde{s}_{\alpha}}}_{n-m-1}.\nonumber
\end{align}

It is enough to show that $\underbrace{\tilde{s}_{\alpha}\dotsm\tilde{s}_{\beta}}_{m}\underbrace{c_{\tilde{s}_{\alpha}}\dotsm c_{\tilde{s}_{\beta}}}_{n-m}=tc_{\tilde{s}_{\beta}}\underbrace{\tilde{s}_{\alpha}\dotsm\tilde{s}_{\beta}}_{m}\underbrace{c_{\tilde{s}_{\alpha}}\dotsm c_{\tilde{s}_{\alpha}}}_{n-m-1}$. But $t\tilde{s}_{\beta}=\underbrace{\tilde{s}_{\alpha}\dotsm\tilde{s}_{\alpha}}_{n-1}\tilde{s}_{\beta}\underbrace{\tilde{s}_{\alpha}^{-1}\dotsm\tilde{s}_{\alpha}^{-1}}_{n-1}$, so $tc_{\tilde{s}_{\beta}}=\underbrace{\tilde{s}_{\alpha}\dotsm\tilde{s}_{\alpha}}_{n-1}c_{\tilde{s}_{\beta}}\underbrace{\tilde{s}_{\alpha}^{-1}\dotsm\tilde{s}_{\alpha}^{-1}}_{n-1}$. Therefore
\begin{align}\nonumber
tc_{\tilde{s}_{\beta}}\underbrace{\tilde{s}_{\alpha}\dotsm\tilde{s}_{\beta}}_{m}\underbrace{c_{\tilde{s}_{\alpha}}\dotsm c_{\tilde{s}_{\alpha}}}_{n-m-1}
&=\underbrace{\tilde{s}_{\alpha}\dotsm\tilde{s}_{\alpha}}_{n-1}c_{\tilde{s}_{\beta}}\underbrace{\tilde{s}_{\alpha}^{-1}\dotsm\tilde{s}_{\alpha}^{-1}}_{n-1}\underbrace{\tilde{s}_{\alpha}\dotsm\tilde{s}_{\beta}}_{m}\underbrace{c_{\tilde{s}_{\alpha}}\dotsm c_{\tilde{s}_{\alpha}}}_{n-m-1}\\\nonumber
&=\underbrace{\tilde{s}_{\alpha}\dotsm\tilde{s}_{\beta}}_{m}\underbrace{\tilde{s}_{\alpha}\dotsm\tilde{s}_{\alpha}}_{n-m-1}c_{\tilde{s}_{\beta}}\underbrace{\tilde{s}_{\alpha}^{-1}\dotsm\tilde{s}_{\alpha}^{-1}}_{n-m-1}\underbrace{c_{\tilde{s}_{\alpha}}\dotsm c_{\tilde{s}_{\alpha}}}_{n-m-1}\\\nonumber
&=\underbrace{\tilde{s}_{\alpha}\dotsm\tilde{s}_{\beta}}_{m}c_{\tilde{s}_{\alpha}}\underbrace{\tilde{s}_{\beta}\dotsm\tilde{s}_{\alpha}}_{n-m-2}c_{\tilde{s}_{\beta}}\underbrace{\tilde{s}_{\alpha}^{-1}\dotsm\tilde{s}_{\beta}^{-1}}_{n-m-2}\underbrace{c_{\tilde{s}_{\beta}}\dotsm c_{\tilde{s}_{\alpha}}}_{n-m-2}\\\nonumber
&=\underbrace{\tilde{s}_{\alpha}\dotsm\tilde{s}_{\beta}}_{m}c_{\tilde{s}_{\alpha}}c_{\tilde{s}_{\beta}}\underbrace{\tilde{s}_{\alpha}\dotsm\tilde{s}_{\alpha}}_{n-m-3}c_{\tilde{s}_{\beta}}\underbrace{\tilde{s}_{\alpha}^{-1}\dotsm\tilde{s}_{\alpha}^{-1}}_{n-m-3}\underbrace{c_{\tilde{s}_{\alpha}}\dotsm c_{\tilde{s}_{\alpha}}}_{n-m-3}\\\nonumber
&\dotsm\\\nonumber
&=\underbrace{\tilde{s}_{\alpha}\dotsm\tilde{s}_{\beta}}_{m}\underbrace{c_{\tilde{s}_{\alpha}}\dotsm c_{\tilde{s}_{\beta}}}_{n-m}.
\end{align}

The third equality holds since $$\underbrace{\tilde{s}_{\alpha}\dotsm\tilde{s}_{\alpha}}_{n-m-1}c_{\tilde{s}_{\beta}}\underbrace{\tilde{s}_{\alpha}^{-1}\dotsm\tilde{s}_{\alpha}^{-1}}_{n-m-1}c_{\tilde{s}_{\alpha}}=c_{\tilde{s}_{\alpha}}\underbrace{\tilde{s}_{\beta}\dotsm\tilde{s}_{\alpha}}_{n-m-2}c_{\tilde{s}_{\beta}}\\\underbrace{\tilde{s}_{\alpha}^{-1}\dotsm\tilde{s}_{\beta}^{-1}}_{n-m-2}$$ which is true becasue $\underbrace{\tilde{s}_{\beta}\dotsm\tilde{s}_{\alpha}}_{n-m-2}c_{\tilde{s}_{\beta}}\underbrace{\tilde{s}_{\alpha}^{-1}\dotsm\tilde{s}_{\beta}^{-1}}_{n-m-2}\in R[Z]$ and $\tilde{s}_{\alpha}t'\tilde{s}_{\alpha}^{-1}c_{\tilde{s}_{\alpha}}=c_{\tilde{s}_{\alpha}t'\tilde{s}_{\alpha}^{-1}\tilde{s}_{\alpha}}=c_{\tilde{s}_{\alpha}t'}=c_{\tilde{s}_{\alpha}}t'$ for any $t'\in Z$. And all subsequent equalities hold for a similar reason.
\end{proof}

As the main result of this section, this theorem guarantees that $r_{v,\underline{w}}$ is independent of the choice of reduced expression of $w$. So we can rewrite the operator as $r_{v,w}$, which is what we need and will be used later.

By definition of the operator, we can easily get the following propositions.

\begin{prop}
Let $u,v,w\in W$ and suppose $u\leq v\leq w$, then
$$r_{u,v}r_{v,w}=r_{u,w}.$$
\end{prop}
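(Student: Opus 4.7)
The plan is to evaluate both sides on a basis element $T_{\tilde{w}}$ using a single compatible choice of reduced expression for $w$ together with compatible subexpressions for $v\le w$ and $u\le v$, and then to recognize the result of $r_{u,v}\,r_{v,w}$ as the same substitution that defines $r_{u,w}$.

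First I fix a reduced expression $\underline{w}=s_{i_1}\cdots s_{i_n}\tau$ of $w$ with a lifting $\tilde{w}=t\tilde{s}_{i_1}\cdots\tilde{s}_{i_n}\tilde{\tau}$, a non-decreasing subexpression $(e_1,\ldots,e_n)$ of $\underline{w}$ equal to $v$, and a non-decreasing subexpression $(f_1,\ldots,f_m)$ of the induced reduced expression $\underline{v}=s_{i_{k_1}}\cdots s_{i_{k_m}}\tau$ equal to $u$, where $k_1<\cdots<k_m$ are the $1$-positions of $(e_k)$. These combine into a non-decreasing subexpression $(g_1,\ldots,g_n)$ of $\underline{w}$ equal to $u$, defined by $g_{k_j}=f_j$ and $g_k=0$ otherwise. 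By the previous theorem, all three operators may be computed using these data.

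Next I would write out $r_{v,w}(T_{\tilde{w}})=T_tX_1\cdots X_nT_{\tilde{\tau}}$ with $X_k=T_{\tilde{s}_{i_k}}$ if $e_k=1$ and $X_k=-c_{\tilde{s}_{i_k}}$ if $e_k=0$. Expanding each $c_{\tilde{s}_{i_k}}$ in the basis $\{T_t\}_{t\in Z}$ decomposes $r_{v,w}(T_{\tilde{w}})$ as a linear combination of basis elements $T_{\tilde{v}'}$ for various liftings $\tilde{v}'$ of $v$, and I then apply $r_{u,v}$ to each such term. The central claim is that this term-by-term application coincides with the direct substitution on $T_tX_1\cdots X_nT_{\tilde{\tau}}$ that replaces each $X_{k_j}=T_{\tilde{s}_{i_{k_j}}}$ by $-c_{\tilde{s}_{i_{k_j}}}$ whenever $f_j=0$, leaving all other factors (including the length-$0$ ones coming from the expansion of $c_{\tilde{s}_{i_k}}$ with $e_k=0$) untouched. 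Granting this, the result is $T_tW_1\cdots W_nT_{\tilde{\tau}}$ with $W_k=T_{\tilde{s}_{i_k}}$ for $g_k=1$ and $W_k=-c_{\tilde{s}_{i_k}}$ for $g_k=0$, which by construction of $(g_k)$ is precisely $r_{u,w}(T_{\tilde{w}})$.

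The hardest step is justifying this central claim. The difficulty is that the reduced expression used to define $r_{u,v}(T_{\tilde{v}'})$ gathers all length-$0$ factors of $\tilde{v}'$ on the left, whereas the direct substitution leaves them interspersed with the $T_{\tilde{s}}$- and $c_{\tilde{s}}$-factors. Reconciling the two orderings requires repeatedly commuting length-$0$ elements past $T_{\tilde{s}}$- and $c_{\tilde{s}}$-factors, and the conjugation twists introduced cancel precisely because of the identity $\tilde{s}(t)\,c_{\tilde{s}}=c_{\tilde{s}}\,t$ for $\tilde{s}\in S^{\text{aff}}(1)$ and $t\in Z$, which is itself a direct consequence of Lemma \ref{1}. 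Iterating this identity, together with the independence of $r_{u,v}$ from the choice of reduced expression of $\tilde{v}'$, closes the argument.
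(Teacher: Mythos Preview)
The paper does not give a proof; it simply records the proposition as following ``by definition of the operator'' once well-definedness is established. Your argument is the natural unpacking of that remark and is correct: choose compatible expressions, compose the two substitution rules, and observe that the combined substitution is the one defining $r_{u,w}$.

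The commutation issue you flag is genuine and your resolution is right. When a length-$0$ factor $t'$ (from expanding some $-c_{\tilde{s}_{i_k}}$ with $e_k=0$) is pushed left across a factor $T_{\tilde{s}_{i_{k_j}}}$ with $e_{k_j}=1$, it becomes $\tilde{s}_{i_{k_j}}(t')$, and if $f_j=0$ we must then compare $\tilde{s}_{i_{k_j}}(t')\,(-c_{\tilde{s}_{i_{k_j}}})$ with $(-c_{\tilde{s}_{i_{k_j}}})\,t'$; these agree precisely by the relation $c_{\tilde{s}t}=c_{\tilde{s}}t$ in the definition of $\mathcal{H}_R$ (equivalently Lemma~\ref{1}). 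One small comment: you do not actually need the independence of $r_{u,v}$ from the choice of reduced expression at the end, since all the liftings $\tilde{v}'$ arising in the expansion share the single reduced expression $\underline{v}=s_{i_{k_1}}\cdots s_{i_{k_m}}\tau$ and differ only in the left $Z$-factor; left $R[Z]$-linearity of $r_{u,v}$, which is immediate from the definition, suffices.
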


\begin{prop}\label{2}
Let $u,v,w\in W$ and $\tilde{u},\tilde{w}\in W(1)$ be liftings of $u,w$ respectively. 
\begin{itemize}
\item[(1)]
If $v\leq w$ and $\ell(uv)=\ell(u)+\ell(v), \ell(uw)=\ell(u)+\ell(w)$, then
$$T_{\tilde{u}}r_{v,w}(T_{\tilde{w}})=r_{uv,uw}(T_{\tilde{u}\tilde{w}}).$$
\item[(2)]
If $v\leq w$ and $\ell(vu)=\ell(v)+\ell(u), \ell(wu)=\ell(w)+\ell(u)$, then $$r_{v,w}(T_{\tilde{w}})T_{\tilde{u}}=r_{vu,wu}(T_{\tilde{w}\tilde{u}}).$$
\end{itemize}
\end{prop}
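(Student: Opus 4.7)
The plan is to exploit the two independence properties proved earlier for $r_{v,w}$: independence of the chosen reduced expression of $w$, and independence of the chosen non-decreasing subexpression representing $v$. Once those are in hand, the proposition becomes a bookkeeping exercise: I will pick reduced expressions that are adapted to the product $uw$ (resp.\ $wu$).

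For part (1), fix a reduced expression $\underline{u}$ of $u$ and a reduced expression $\underline{w}=s_{i_1}\dotsm s_{i_n}\tau$ of $w$. The hypothesis $\ell(uw)=\ell(u)+\ell(w)$ says that the concatenation, after moving $\Omega$-letters to the right via the normalizing action of $\Omega$ on $S^{\text{aff}}$, is itself a reduced expression $\underline{uw}$ of $uw$. Next, pick any non-decreasing subexpression $\underline{v}_{\underline{w}}=s_{i_1}^{e_1}\dotsm s_{i_n}^{e_n}\tau$ of $\underline{w}$ equal to $v$. The second length hypothesis $\ell(uv)=\ell(u)+\ell(v)$ then guarantees that retaining every letter of $\underline{u}$ and keeping the same exponents on the $\underline{w}$-portion produces a non-decreasing subexpression $\underline{uv}_{\underline{uw}}$ of $\underline{uw}$ equal to $uv$, with exponent $1$ at every letter coming from the $\underline{u}$-block.

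Now apply the definition of $r_{uv,uw}$ to $T_{\tilde{u}\tilde{w}}$ using this adapted data. Because $\ell(\tilde{u}\tilde{w})=\ell(\tilde{u})+\ell(\tilde{w})$, the braid relations give $T_{\tilde{u}\tilde{w}}=T_{\tilde{u}}T_{\tilde{w}}$, and a lifting of $\underline{uw}$ compatible with the given liftings $\tilde u$ and $\tilde w$ splits the defining product into two blocks. The initial block, where every exponent is $1$, collapses via the braid relations to exactly $T_{\tilde{u}}$. The terminal block is, by construction, precisely $r_{v,w}(T_{\tilde{w}})$ computed from $\underline{v}_{\underline{w}}$. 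This yields the identity $T_{\tilde{u}}r_{v,w}(T_{\tilde{w}})=r_{uv,uw}(T_{\tilde{u}\tilde{w}})$.

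Part (2) is entirely symmetric: append $\underline{u}$ on the right to obtain a reduced expression for $wu$, extend $\underline{v}_{\underline{w}}$ by taking every letter of $\underline{u}$ with exponent $1$, and read off $r_{vu,wu}(T_{\tilde{w}\tilde{u}})$ as $r_{v,w}(T_{\tilde{w}})\cdot T_{\tilde{u}}$. The only mildly delicate point anywhere in the argument is the tracking of $\Omega$-components and the central scalars from $Z$ that arise when concatenating reduced expressions and their liftings; these are absorbed using the conjugation of $\Omega$ on $S^{\text{aff}}$ together with Lemma \ref{1}, exactly as in the proof of the preceding independence theorem.
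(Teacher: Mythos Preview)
Your proposal is correct and is exactly the argument the paper has in mind: the paper does not write out a proof at all, stating only that ``By definition of the operator, we can easily get the following propositions,'' and your concatenation-of-reduced-expressions argument is precisely that unwinding of the definition. The only caveat is that your appeal to Lemma~\ref{1} for the $\Omega$- and $Z$-bookkeeping is not really needed---what you actually use is the independence of $r$ from the choice of liftings $\tilde{s}$ (so that you may lift $s_{i_k}':=\sigma s_{i_k}\sigma^{-1}$ by $\tilde{\sigma}\tilde{s}_{i_k}\tilde{\sigma}^{-1}$) together with the braid relations $T_{\tilde{\sigma}}T_{\tilde{s}_{i_k}}=T_{\tilde{\sigma}\tilde{s}_{i_k}\tilde{\sigma}^{-1}}T_{\tilde{\sigma}}$ and $T_{\tilde{\sigma}}c_{\tilde{s}_{i_k}}=c_{\tilde{\sigma}\tilde{s}_{i_k}\tilde{\sigma}^{-1}}T_{\tilde{\sigma}}$.
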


\section{Maximal Length Elements}

Let $\mathcal{Z}_R(0,c_{\tilde{s}})$ be the center of $\mathcal{H}_R(0,c_{\tilde{s}})$ and $h\in \mathcal{Z}_R(0,c_{\tilde{s}})$. Then 
$$h=\sum_{\tilde{w}\in W(1)} a_{\tilde{w}}T_{\tilde{w}},\quad\text{for some}\ \ a_{\tilde{w}}\in R.$$
Set $\text{supp}(h)=\{\tilde{w}\in W(1)| a_{\tilde{w}}\neq 0\}$. Let $\text{supp}(h)_{\max}$ be the set of maximal length elements in $\text{supp}(h)$. The following theorem tells what $\text{supp}(h)_{\max}$ is comprised of.

\begin{thm}\label{11}
Suppose $h\in\mathcal{Z}_R(0,c_{\tilde{s}})$, then $\text{supp}(h)_{\max}$ is a finite union of finite conjugacy classes in $W(1)$.
\end{thm}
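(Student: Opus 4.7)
The plan is to extract closure properties of $\text{supp}(h)_{\max}$ from the centrality equations $T_{\tilde s}h = h T_{\tilde s}$ (for $\tilde s \in S^{\text{aff}}(1)$) and $T_{\tilde \tau}h = h T_{\tilde \tau}$ (for $\tilde \tau \in \Omega(1)$) by focusing on the top-degree part of $h$. Set $n := \max\{\ell(\tilde w) : \tilde w \in \text{supp}(h)\}$ and write $h = \sum a_{\tilde w}T_{\tilde w}$. Because $q_{\tilde s} = 0$, the quadratic and braid relations give $T_{\tilde s}T_{\tilde w} = T_{\tilde s\tilde w}$ (a basis element of length $\ell(w)+1$) when $\ell(sw) > \ell(w)$, and $T_{\tilde s}T_{\tilde w} = c_{\tilde s}T_{\tilde w}$ (a combination of basis elements of length $\ell(w)$) otherwise. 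So the length-$(n+1)$ piece of $T_{\tilde s}h$ is precisely $\sum_{\tilde w \in \text{supp}(h)_{\max},\,\ell(sw)>n} a_{\tilde w}T_{\tilde s\tilde w}$, and similarly for $h T_{\tilde s}$.

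Equating these length-$(n+1)$ pieces coefficient-by-coefficient gives the key constraint. For $\tilde w \in \text{supp}(h)_{\max}$ with $\ell(sw) > \ell(w)$, the basis element $T_{\tilde s\tilde w}$ on the left must be matched on the right by $T_{\tilde w'\tilde s}$ with $\tilde w' \in \text{supp}(h)_{\max}$ and $\ell(w's) > n$. The identity $\tilde s\tilde w = \tilde w'\tilde s$ forces $\tilde w' = \tilde s\tilde w\tilde s^{-1}$; requiring $\ell(\tilde w') = n$ rules out the alternative $\ell(sws) = n+2$ (which would leave $T_{\tilde s\tilde w}$ uncancelled and force $a_{\tilde w} = 0$); so $\ell(sws) = n$ and $a_{\tilde s\tilde w\tilde s^{-1}} = a_{\tilde w}$. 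A symmetric analysis, applied when $\ell(ws) > \ell(w)$, gives $\tilde s^{-1}\tilde w\tilde s \in \text{supp}(h)_{\max}$ with matching coefficient. For $\tilde \tau \in \Omega(1)$, invertibility of $T_{\tilde \tau}$ together with $T_{\tilde \tau^{-1}}h T_{\tilde \tau} = h$ gives $a_{\tilde \tau^{-1}\tilde w \tilde \tau} = a_{\tilde w}$ directly. Together these show that $\text{supp}(h)_{\max}$ is closed under every $\Omega(1)$-conjugation and every ``ascending'' $S^{\text{aff}}(1)$-conjugation.

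The main technical obstacle is the double-descent case $\ell(sw) < \ell(w)$ and $\ell(ws) < \ell(w)$, where the top-degree comparison yields no direct constraint on $\tilde s\tilde w\tilde s^{-1}$. Resolving this requires finer analysis, most plausibly by descending to the length-$n$ coefficient equation -- which brings in the $c_{\tilde s}$-terms and interrelates $a_{\tilde w}$ with coefficients of length-$(n-1)$ elements through the right/left $Z$-translation ambiguity -- or by exploiting the operator $r_{v,w}$ introduced in Section~3 together with Proposition~\ref{2} to track how coefficients propagate along conjugation chains. Once closure of $\text{supp}(h)_{\max}$ under the full $W(1)$-conjugation is established, the conclusion follows at once: $\text{supp}(h)_{\max}$ is a union of $W(1)$-conjugacy classes, and since it is finite, each such class is finite. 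By Lemma~2.1 of \cite{vm2} cited in the preliminaries, finite conjugacy classes in $W(1)$ are exactly those contained in $\Lambda(1)$, so $\text{supp}(h)_{\max}$ is a finite union of finite conjugacy classes lying in $\Lambda(1)$, as claimed.
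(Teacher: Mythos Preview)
Your treatment of the ascending case and of $\Omega(1)$-conjugation is essentially the paper's Lemma~\ref{3} and is correct. The genuine gap is exactly the one you flag: the double-descent case $\ell(sw)<\ell(w)$ and $\ell(ws)<\ell(w)$. You do not resolve it; the two routes you sketch (chasing the length-$n$ coefficient equation through the $c_{\tilde s}$-terms, or invoking $r_{v,w}$) are not carried out, and neither is how the paper proceeds. The operator $r_{v,w}$ is used later to \emph{build} central elements, not to analyse $\text{supp}(h)_{\max}$, and the coefficient-chasing route has no evident inductive closure since lower-length coefficients are unconstrained at this stage.

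The paper's resolution is to \emph{eliminate} the double-descent case rather than confront it. Invoking Rostami's theorem (Theorem~\ref{12}), any $w\notin\Lambda$ can be carried by a chain of length-preserving $S^{\text{aff}}$-conjugations, each an ascending step and hence covered by what you already proved, to some $w'$ with $\ell(sw's)>\ell(w')$. One more ascending step then forces a conjugate of strictly greater length into $\text{supp}(h)_{\max}$, a contradiction; this gives $\text{supp}(h)_{\max}\subset\Lambda(1)$ (Lemma~\ref{4}). But for $\lambda\in\Lambda$ the double-descent situation never arises: by Lemma~\ref{13}, $\ell(s\lambda)<\ell(\lambda)$ and $\ell(\lambda s)<\ell(\lambda)$ would require the single hyperplane $H_s$ to separate $\mathfrak{A}_0$ from both $\mathfrak{A}_0+\nu(\lambda)$ and $\mathfrak{A}_0-\nu(\lambda)$, which is impossible. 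So once inside $\Lambda(1)$, your ascending argument applies to every $S^{\text{aff}}(1)$-conjugation and full $W(1)$-closure follows. The missing ingredient in your proposal is precisely this appeal to Theorem~\ref{12}.
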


This theorem comes from the following results.

\begin{lem}\label{3}
Let $\tilde{s}\in S^{\text{aff}}(1)$, $h\in \mathcal{Z}_{R}(0,c_{\tilde{s}})$ and $\tilde{w}\in \text{supp}(h)_{\max}$. If $\ell(\tilde{s}\tilde{w})>\ell(\tilde{w})$ or $\ell(\tilde{w}\tilde{s})>\ell(\tilde{w})$, then
$\tilde{s}\tilde{w}\tilde{s}^{-1}\in\text{supp}(h)_{\max}$ and $a_{\tilde{s}\tilde{w}\tilde{s}^{-1}}=a_{\tilde{w}}$.
\end{lem}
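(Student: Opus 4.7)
The plan is to exploit centrality in the form $T_{\tilde{s}}h = hT_{\tilde{s}}$ by comparing the length-$(N+1)$ components of both sides, where $N = \ell(\tilde{w})$. The first step is to describe each product $T_{\tilde{s}}T_{\tilde{v}}$ and $T_{\tilde{v}}T_{\tilde{s}}$ in the Iwahori--Matsumoto basis. By the braid relation, if $\ell(\tilde{s}\tilde{v}) > \ell(\tilde{v})$ then $T_{\tilde{s}}T_{\tilde{v}} = T_{\tilde{s}\tilde{v}}$, a single basis element of length $\ell(\tilde{v})+1$. Otherwise, writing $\tilde{v} = \tilde{s}\tilde{v}''$ with $\ell(\tilde{v}'') = \ell(\tilde{v})-1$ and invoking the quadratic relation $T_{\tilde{s}}^{2} = c_{\tilde{s}}T_{\tilde{s}}$, one obtains $T_{\tilde{s}}T_{\tilde{v}} = c_{\tilde{s}}T_{\tilde{v}} = \sum_{t \in Z} c_{\tilde{s}}(t)\,T_{t\tilde{v}}$, a combination of basis elements all of length $\ell(\tilde{v})$. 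The decomposition of $T_{\tilde{v}}T_{\tilde{s}}$ is symmetric.

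Because $\text{supp}(h)$ has maximal length $N$, the length-$(N+1)$ part of both $T_{\tilde{s}}h$ and $hT_{\tilde{s}}$ is gathered entirely from $\tilde{v} \in \text{supp}(h)_{\max}$ via the ``up-step'' case. Equating them and comparing coefficients of the basis element $T_{\tilde{s}\tilde{w}}$ in Case A ($\ell(\tilde{s}\tilde{w}) > N$): on the left, the unique $\tilde{v}$ with $\tilde{s}\tilde{v} = \tilde{s}\tilde{w}$ is $\tilde{v} = \tilde{w}$, contributing $a_{\tilde{w}}$; on the right, the unique $\tilde{v}$ with $\tilde{v}\tilde{s} = \tilde{s}\tilde{w}$ is $\tilde{v} = \tilde{s}\tilde{w}\tilde{s}^{-1}$, contributing $a_{\tilde{s}\tilde{w}\tilde{s}^{-1}}$. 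These two coefficients therefore coincide, and since $a_{\tilde{w}} \ne 0$, the element $\tilde{s}\tilde{w}\tilde{s}^{-1}$ must lie in $\text{supp}(h)_{\max}$ with $a_{\tilde{s}\tilde{w}\tilde{s}^{-1}} = a_{\tilde{w}}$.

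The main obstacle is confirming that $\tilde{s}\tilde{w}\tilde{s}^{-1}$ really has length $N$, so that it qualifies as a member of $\text{supp}(h)_{\max}$. A priori $\ell(\tilde{s}\tilde{w}\tilde{s}^{-1}) \in \{N, N+2\}$, since left multiplication by $\tilde{s}$ took $\tilde{w}$ to $\tilde{s}\tilde{w}$ of length $N+1$ under the Case A hypothesis, and right multiplication by $\tilde{s}^{-1}$ (itself of length one) shifts the length by $\pm 1$. The value $N+2$ is ruled out by maximality of $N$ in $\text{supp}(h)$: such an element would lie outside $\text{supp}(h)$ and force the right-hand coefficient to vanish, contradicting $a_{\tilde{w}} \ne 0$. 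Case B, where $\ell(\tilde{w}\tilde{s}) > N$, runs symmetrically by extracting the coefficient of $T_{\tilde{w}\tilde{s}}$ instead; the same length-bookkeeping identifies the appropriate conjugate of $\tilde{w}$ by $\tilde{s}$ as an element of $\text{supp}(h)_{\max}$ with coefficient $a_{\tilde{w}}$.
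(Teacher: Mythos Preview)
Your proof is correct and follows essentially the same approach as the paper: both compare the length-$(N+1)$ components of $T_{\tilde{s}}h$ and $hT_{\tilde{s}}$ and read off the coefficient of $T_{\tilde{s}\tilde{w}}$. The paper phrases this via the sets $\text{supp}(T_{\tilde{s}}h)_{\max}$ and $\text{supp}(hT_{\tilde{s}})_{\max}$, while you spell out the multiplication formulas and rule out $\ell(\tilde{s}\tilde{w}\tilde{s}^{-1})=N+2$ explicitly, but the argument is the same.
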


\begin{proof}
Without loss of generality, we may assume that $\ell(\tilde{s}\tilde{w})>\ell(\tilde{w})$. Then $\tilde{s}\tilde{w}\in \text{supp}(T_{\tilde{s}} h)=\text{supp}(h T_{\tilde{s}})$, and
$$\text{supp}(T_{\tilde{s}} h)_{\max}=\{\tilde{s}\tilde{x}|\tilde{x}\in\text{supp}(h)_{\max},\ell(\tilde{s}\tilde{x})>\ell(\tilde{x})\},$$
$$\text{supp}(h T_{\tilde{s}})_{\max}=\{\tilde{y}\tilde{s}|\tilde{y}\in\text{supp}(h)_{\max},\ell(\tilde{y}\tilde{s})>\ell(\tilde{y})\}.$$
Both sets are nonempty because $\tilde{s}\tilde{w}\in \text{supp}(T_{\tilde{s}}h)_{\max}$. Therefore, $\tilde{s}\tilde{w}\tilde{s}^{-1}\in\text{supp}(h)_{\max}$ and $\ell(\tilde{s}\tilde{w}\tilde{s}^{-1})=\ell(\tilde{w})$. The $R$-coefficient of $T_{\tilde{s}\tilde{w}}$ in $T_{\tilde{s}} h$ is $a_{\tilde{w}}$ and the $R$-coefficient of $T_{\tilde{s}\tilde{w}}$ in $h T_{\tilde{s}}$ is $a_{\tilde{s}\tilde{w}\tilde{s}^{-1}}$. Thus $a_{\tilde{s}\tilde{w}\tilde{s}^{-1}}=a_{\tilde{w}}$.
\end{proof}

We recall Main Theorem in \cite{sr}:

\begin{thm}\label{12}
Fix $w\in W$. If $w\notin\Lambda$ then there exists $s\in S^{\text{aff}}$ and $s_1,\dotsm,s_n\in S^{\text{aff}}$ such that, setting $w'\stackrel{\text{def}}=s_n\dotsm s_1 w s_1\dotsm s_n$,
\begin{itemize}
\item
$\ell(s_i\dotsm s_1 w s_1\dotsm s_i)=\ell(w)$ for all $i$,
\item
$\ell(sw's)>\ell(w')$.
\end{itemize}
\end{thm}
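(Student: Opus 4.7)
The plan is to prove this by contradiction, combined with an analysis of the finite set of elements reachable from $w$ by length-preserving simple-reflection conjugations. Define
\[
R(w) := \{w'' \in W : w'' \text{ obtained from } w \text{ by a chain of length-preserving conjugations by elements of } S^{\text{aff}}\}.
\]
Every element of $R(w)$ has length exactly $\ell(w)$, so $R(w)$ is finite. The conclusion of the theorem is equivalent to the assertion: some $w' \in R(w)$ admits $s \in S^{\text{aff}}$ with $\ell(sw's) > \ell(w')$, because unwinding the chain from $w$ to $w'$ supplies the required $s_1, \ldots, s_n$.

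I would then argue by contradiction: assume that for every $w' \in R(w)$ and every $s \in S^{\text{aff}}$, $\ell(sw's) \leq \ell(w')$. Using the geometric characterization of Lemma \ref{13}, this assumption translates into the statement that for each such $w'$ and $s$, at least one of the hyperplanes $H_s$, $w'(H_s)$ separates $\mathfrak{A}_0$ from $w'(\mathfrak{A}_0)$, a strong non-expansiveness condition on all of $R(w)$. The goal is to derive a contradiction by showing that this forces the full conjugacy class $\mathcal{O}_w$ to sit at lengths $\leq \ell(w)$, which contradicts the fact recalled just before Lemma \ref{13} that conjugacy classes disjoint from $\Lambda$ are infinite (and therefore of unbounded length, since the length function takes values in $\mathbb{N}$ and each length level is finite).

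To execute the reduction step, I would pick any $y \in \mathcal{O}_w$, write $y = u w u^{-1}$ with $u = t_k \cdots t_1$ a product of elements of $S^{\text{aff}}$ (times a length-zero element absorbed into $\Omega$), and induct on $k$ to show that $\ell(y) \leq \ell(w)$. At each step, conjugation by $t_i$ either preserves length (keeping us inside some $R(w')$ for $w' \in R(w)$) or decreases it; an upward step would contradict the standing hypothesis applied to the current representative, provided one maintains the invariant that the current element still lies in the closure of $R(w)$ under length-preserving moves. Lemma \ref{14} is the workhorse here, used to control how the Bruhat order, and hence relative lengths, evolve under left/right multiplication by simple reflections as one tracks through the word $t_k \cdots t_1$.

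The main obstacle is precisely this last tracking argument: rigorously showing that the assumption "no $w' \in R(w)$ admits an upward conjugation" propagates through an arbitrary conjugating word to bound the length of every element of $\mathcal{O}_w$. The subtlety is that length-preserving conjugations can permute $R(w)$ in complicated ways, and one must ensure that the inductive step does not allow a "hidden" upward jump outside of $R(w)$; this requires a careful exchange-type analysis relating the occurrence of a length-increasing conjugation at stage $i$ to the existence of some element of $R(w)$ that does admit an upward conjugation, contradicting the hypothesis. This combinatorial rearrangement is the technical heart of the argument and is essentially what the main theorem of \cite{sr} is devoted to establishing.
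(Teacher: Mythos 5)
The first thing to say is that the paper does not prove this statement at all: it is quoted verbatim as the Main Theorem of \cite{sr} (Rostami) and used as a black box, so there is no internal proof to compare yours against. Your proposal therefore has to stand on its own, and it does not. The reduction in your first paragraph is fine --- the theorem is indeed equivalent to the assertion that some element of $R(w)$ admits a length-increasing conjugation by a simple reflection --- but the contradiction argument collapses exactly at the step you flag. The standing hypothesis only constrains elements of $R(w)$, i.e.\ elements at length exactly $\ell(w)$ reachable by length-preserving moves. Once a conjugation step $w_{i-1}\mapsto t_iw_{i-1}t_i$ strictly decreases length, the element $w_i$ leaves $R(w)$, the hypothesis says nothing about it, and a later step may increase length again with no contradiction available; the naive induction ``current length $\leq\ell(w)$'' is not a preserved invariant, and Lemma \ref{14} alone does not repair this. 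Your closing sentence concedes that the needed combinatorial rearrangement ``is essentially what the main theorem of \cite{sr} is devoted to establishing'' --- but that main theorem \emph{is} the statement under discussion, so the argument is circular: you have reformulated the problem, not solved it. (Rostami's actual proof is a genuinely different, largely geometric argument in the apartment, not a formal consequence of Lemmas \ref{13} and \ref{14}.)

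Two smaller points. First, the parenthetical ``each length level is finite'' is false in general: the length-zero elements form $\Omega$, which may be infinite (e.g.\ for $GL_n$). The conclusion you want --- an infinite conjugacy class has unbounded length --- is still true, but you must argue it via the observation that a conjugacy class is contained in a single coset $W^{\text{aff}}\tau_0$, and each such coset contains only finitely many elements of any given length. Second, conjugation by elements of $\Omega$ preserves length but is not generated by simple reflections, so your set $R(w)$ and the chain decomposition $u=t_k\cdots t_1$ need to account for the $\Omega$-part explicitly; this is fixable but currently unaddressed.
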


\begin{lem}\label{4}
Suppose $h\in \mathcal{Z}_{R}(0,c_{\tilde{s}})$ and $\tilde{w}\in\text{supp}(h)_{\max}$, then $\tilde{w}\in \Lambda(1)$.
\end{lem}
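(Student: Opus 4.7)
The plan is to argue by contradiction. Suppose $\tilde{w}\notin\Lambda(1)$, so that $w:=\pi(\tilde{w})\notin\Lambda$. Apply Theorem~\ref{12} to $w$: there exist $s,s_1,\dotsc,s_n\in S^{\text{aff}}$ such that every partial conjugate $w_i:=s_i\dotsm s_1\, w\, s_1\dotsm s_i$ has length $\ell(w)$, and such that $\ell(sw's)>\ell(w')$, where $w':=w_n$. The strategy is to transport $\tilde{w}$ along the sequence $w_0,\dotsc,w_n$ while remaining in $\text{supp}(h)_{\max}$, and then to use the final inequality $\ell(sw's)>\ell(w')$ to contradict the defining property of $\text{supp}(h)_{\max}$.

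First I would produce, by induction on $i$, liftings $\tilde{w}_i\in\text{supp}(h)_{\max}$ of $w_i$, starting from $\tilde{w}_0:=\tilde{w}$. For the inductive step, the equality $\ell(s_{i+1}w_is_{i+1})=\ell(w_i)$ forces (by a standard Matsumoto-type case analysis in Coxeter groups) one of three cases: $\ell(s_{i+1}w_i)>\ell(w_i)$, or $\ell(w_is_{i+1})>\ell(w_i)$, or $s_{i+1}w_i=w_is_{i+1}$ (so $w_{i+1}=w_i$ and we set $\tilde{w}_{i+1}:=\tilde{w}_i$). In the first two cases the hypothesis of Lemma~\ref{3} is met, and the lemma directly produces $\tilde{w}_{i+1}:=\tilde{s}_{i+1}\tilde{w}_i\tilde{s}_{i+1}^{-1}\in\text{supp}(h)_{\max}$. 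The main obstacle in the whole argument is this trichotomy: one must rule out that both $s_{i+1}w_i$ and $w_is_{i+1}$ are shorter than $w_i$ without the conjugation being trivial. This is a routine consequence of the strong exchange condition for $W^{\text{aff}}$, transferred to $W=W^{\text{aff}}\rtimes\Omega$ via the definition of length recalled in Section~2.

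Once I have $\tilde{w}':=\tilde{w}_n\in\text{supp}(h)_{\max}$ lifting $w'$, I would apply Lemma~\ref{3} one final time with the reflection $\tilde{s}$. The inequality $\ell(sw's)>\ell(w')$ combined with $\ell(sw's)-\ell(w')\in\{-2,0,2\}$ forces $\ell(sw's)=\ell(w')+2$, and in particular $\ell(\tilde{s}\tilde{w}')=\ell(\tilde{w}')+1>\ell(\tilde{w}')$. Lemma~\ref{3} then yields $\tilde{s}\tilde{w}'\tilde{s}^{-1}\in\text{supp}(h)_{\max}$, which forces $\ell(\tilde{s}\tilde{w}'\tilde{s}^{-1})=\ell(\tilde{w})$. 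But the projection $sw's$ has length $\ell(w')+2=\ell(\tilde{w})+2$, so $\ell(\tilde{s}\tilde{w}'\tilde{s}^{-1})=\ell(\tilde{w})+2$, a contradiction. Hence $\tilde{w}\in\Lambda(1)$.
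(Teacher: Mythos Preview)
Your proof is correct and follows essentially the same route as the paper's: argue by contradiction, invoke Theorem~\ref{12}, then apply Lemma~\ref{3} repeatedly along the conjugation chain until the final strict length increase $\ell(sw's)>\ell(w')$ yields the contradiction. The only cosmetic difference is that the paper tacitly discards the trivial conjugation steps (imposing $s_i w_{i-1} s_i \ne w_{i-1}$ at each stage) whereas you handle that case via the third branch of your trichotomy, and you spell out the Coxeter-theoretic reason why the hypothesis of Lemma~\ref{3} is met at each nontrivial step, a point the paper leaves implicit.
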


\begin{proof}
We prove by contradiction. Assume $\tilde{w}\in\text{supp}(h)_{\max}$ but $\tilde{w}\notin \Lambda(1)$.

By Theorem \ref{12}, $\exists \tilde{s}\in S^{\text{aff}}(1)$ and $\tilde{s}_1,\tilde{s}_2,\dotsm,\tilde{s}_n\in S^{\text{aff}}(1)$ such that
\begin{itemize}
\item
$\ell(\tilde{s}_i\dotsm\tilde{s}_1\tilde{w}\tilde{s}_1^{-1}\dotsm\tilde{s}_i^{-1})=\ell(\tilde{w})$ for all $i$,
\item
$\pi(\tilde{s}_i\tilde{s}_{i-1}\dotsm \tilde{s}_1\tilde{w}\tilde{s}_1^{-1}\dotsm \tilde{s}_{i-1}^{-1}\tilde{s}_i^{-1})\neq \pi(\tilde{s}_{i-1}\dotsm \tilde{s}_1\tilde{w}\tilde{s}_1^{-1}\dotsm \tilde{s}_{i-1}^{-1})$ for all $i$,
\item
$\ell(\tilde{s}\tilde{w'}\tilde{s}^{-1})>\ell(\tilde{w'})$, where $\tilde{w'}=\tilde{s}_n\dotsm \tilde{s}_2\tilde{s}_1\tilde{w}\tilde{s}_1^{-1}\tilde{s}_2^{-1}\dotsm \tilde{s}_n^{-1}$.
\end{itemize}

By Lemma \ref{3}, $\tilde{s}_i\dotsm \tilde{s}_1\tilde{w}\tilde{s}_1^{-1}\dotsm\tilde{s}_i^{-1}\in \text{supp}(h)_{\max}$ for all $i$, in particular, $\tilde{w'}=\tilde{s}_n\dotsm\tilde{s}_2\tilde{s}_1\tilde{w}\tilde{s}_1^{-1}\tilde{s}_2^{-1}\dotsm\tilde{s}_n^{-1}\in \text{supp}(h)_{\max}$.

By Lemma \ref{3} again, $\tilde{s}\tilde{w'}\tilde{s}^{-1}\in \text{supp}(h)_{\max}$. But $\ell(\tilde{s}\tilde{w'}\tilde{s}^{-1})>\ell(\tilde{w'})$, which is a contradiction.
\end{proof}

\begin{proof}[Proof of Theorem \ref{11}]
It suffices to show that if $h\in \mathcal{Z}_{R}(0,c_{\tilde{s}})$, $\tilde{w}\in\text{supp}(h)_{\max}$ and $Cl(\tilde{w})$ is the $W(1)$-conjugacy class of $\tilde{w}$ in $W(1)$, then $Cl(\tilde{w})\subseteq\text{supp}(h)_{\max}$ and $a_{\tilde{w'}}=a_{\tilde{w}}$ for any $\tilde{w'}\in Cl(\tilde{w})$.

By Lemma \ref{3} and Lemma \ref{4}, $\tilde{x}\tilde{w}\tilde{x}^{-1}\in\text{supp}(h)_{\max}$ and $a_{\tilde{x}\tilde{w}\tilde{x}^{-1}}=a_{\tilde{w}}$ for any $\tilde{x}\in W^{\text{aff}}(1)$. It remains to show that $\tilde{\tau} \tilde{w}\tilde{\tau}^{-1}\in\text{supp}(h)_{\max}$ and $a_{\tilde{\tau} \tilde{w}\tilde{\tau}^{-1}}=a_{\tilde{w}}$ for any $\tilde{\tau}\in\Omega(1)$. But $\tilde{\tau}\tilde{w}\in \text{supp}(T_{\tilde{\tau}} h)=\text{supp}(h T_{\tilde{\tau}})$, and
$$\text{supp}(T_{\tilde{\tau}} h)_{\max}=\{\tilde{\tau}\tilde{x}|\tilde{x}\in\text{supp}(h)_{\max}\},$$
$$\text{supp}(h T_{\tilde{\tau}})_{\max}=\{\tilde{y}\tilde{\tau}|\tilde{y}\in\text{supp}(h)_{\max}\}.$$
Both sets are nonempty because $\tilde{\tau}\tilde{w}\in \text{supp}(T_{\tilde{\tau}}h)_{\max}$. Therefore, $\tilde{\tau}\tilde{w}\tilde{\tau}^{-1}\in\text{supp}(h)_{\max}$. The $R$-coefficient of $T_{\tilde{\tau}\tilde{w}}$ in $T_{\tilde{\tau}} h$ is $a_{\tilde{w}}$ and the $R$-coefficient of $T_{\tilde{\tau}\tilde{w}}$ in $h T_{\tilde{\tau}}$ is $a_{\tilde{\tau}\tilde{w}\tilde{\tau}^{-1}}$. Thus $a_{\tilde{\tau}\tilde{w}\tilde{\tau}^{-1}}=a_{\tilde{w}}$.
\end{proof}

It is well-known that a conjugacy class $C$ of $W$ is finite if and only if $C$ is contained in $\Lambda$. The same is true for $W(1)$ and $\Lambda(1)$. So $\text{supp}(h)$ is a finite union of some conjugacy classes in $\Lambda(1)$.

In fact, Theorem $4.1$ is valid even without the restriction on $q_{\tilde{s}}$, because the proof does not use the fact that $q_{\tilde{s}}=0$. 

\section{Some Technical Results}

Let $C$ be a finite conjugacy class in $W(1)$. Set 
$$h_{\lambda,C}=\sum_{\tilde{\lambda}\in \pi^{-1}(\lambda)\cap C}T_{\tilde{\lambda}},$$
for every $\lambda\in\pi(C)$. We may just write $h_{\lambda}$ for $h_{\lambda,C}$ when there is no ambiguity. Now we prove some properties of $r_{x,\lambda}(h_{\lambda})$.

\begin{lem}\label{6}
Let $C$ be a finite conjugacy class in $W(1)$, $\lambda\in\pi(C)$ and $s\in S^{\text{aff}}$. Let $x\in W$ with $x<sx$ or $x<xs$. Suppose that $x\leq \lambda$ and $x\leq s\lambda s$, then
$$r_{x,\lambda}(h_{\lambda})=r_{x,s\lambda s}(h_{s\lambda s}).$$
\end{lem}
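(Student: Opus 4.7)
The plan is to case-split on whether conjugation by $s$ moves $\lambda$. If $s\lambda s=\lambda$, then $h_{s\lambda s}=h_\lambda$ (both are sums over the same set), and $r_{x,\lambda}=r_{x,s\lambda s}$ since the operator depends only on the pair $(x,w)$ and not on the chosen reduced expression, so the identity is tautological. Otherwise, by Lemma~\ref{4} we have $\lambda\in\Lambda$, and $s\lambda s$ is the translation obtained by applying the linear part of $s$ to $\nu(\lambda)$. Since $\ell$ is constant on $W_0$-conjugacy classes in $\Lambda$ (Section~2), $\ell(s\lambda s)=\ell(\lambda)$. The statement is symmetric in $\lambda$ and $s\lambda s$, so I may further assume that $s$ is a left descent of $\lambda$. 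Set $\lambda'=s\lambda$; then $\lambda=s\cdot\lambda'$ and $s\lambda s=\lambda'\cdot s$ are reduced factorizations of length $\ell(\lambda)$.

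Write $\lambda'=\lambda_0'\tau$ with $\lambda_0'\in W^{\mathrm{aff}}$ and $\tau\in\Omega$, and put $s'=\tau s\tau^{-1}\in S^{\mathrm{aff}}$. Fix a reduced expression $\underline{\lambda_0'}$; then $\underline\lambda=s\cdot\underline{\lambda_0'}\cdot\tau$ and $\underline{s\lambda s}=\underline{\lambda_0'}\cdot s'\cdot\tau$ are reduced. Lemma~\ref{14} yields $x\le\lambda'$: if $x<sx$, apply it on the left to $x\le\lambda$ to get $x=\min\{x,sx\}\le\min\{\lambda,s\lambda\}=\lambda'$; if $x<xs$, apply it on the right to $x\le s\lambda s=\lambda's$ to get $x\le\lambda'$ similarly. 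Pick a non-decreasing subexpression of $x$ inside $\underline{\lambda_0'}\cdot\tau$ and extend it to subexpressions of $x$ in $\underline\lambda$ and in $\underline{s\lambda s}$ by taking the extra slot (the leading $s$ on one side, the trailing $s'$ on the other) equal to zero. Both extensions yield the same middle operator $M$.

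For a lifting $\tilde\lambda=t_{\tilde\lambda}\,\tilde s\,\tilde\lambda_0'\,\tilde\tau\in C\cap\pi^{-1}(\lambda)$, direct manipulation using $\tilde s\,t=s(t)\,\tilde s$ and $\tilde\tau\tilde s^{-1}\tilde\tau^{-1}=z_2\tilde{s'}$ for a fixed $z_2\in Z$ gives $\tilde s\tilde\lambda\tilde s^{-1}=\bigl(s(t_{\tilde\lambda})\,z_0\bigr)\,\tilde\lambda_0'\,\tilde{s'}\,\tilde\tau$ with $z_0=\tilde s^2\cdot\lambda_0'(z_2)\in Z$ independent of $\tilde\lambda$. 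Unwinding the definition of $r$ then gives
\[
r_{x,\lambda}(T_{\tilde\lambda})=T_{t_{\tilde\lambda}}(-c_{\tilde s})\,M\,T_{\tilde\tau},\qquad r_{x,s\lambda s}(T_{\tilde s\tilde\lambda\tilde s^{-1}})=T_{s(t_{\tilde\lambda})z_0}\,M\,(-c_{\tilde{s'}})\,T_{\tilde\tau}.
\]
Summing over $\tilde\lambda$ and pushing every $R[Z]$-factor to the right of $M\,T_{\tilde\tau}$ via the commutation rule $c\cdot T_{\tilde w}=T_{\tilde w}\cdot w^{-1}(c)$ for $c\in R[Z]$, each side becomes $M\,T_{\tilde\tau}$ times an element of the commutative algebra $R[Z]$; equality of these factors reduces to an identity that follows from $c_{\tilde s t}=c_{\tilde s}t$ and $\tilde w(c_{\tilde s})=c_{\tilde w\tilde s\tilde w^{-1}}$. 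Specifically, these relations give $c_{\tilde{s'}}=\tilde\tau(c_{\tilde s^{-1}})\cdot s'(z_2^{-1})$, which is exactly what cancels against the explicit formula for $z_0$.

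The main obstacle is the bookkeeping of $Z$-elements generated while moving liftings past each other---the various $\tilde s^2$, $z_2$, and conjugates thereof under $\lambda_0'$, $\tau$, and $x_0=x\tau^{-1}$---all of which must be traced on both sides and shown to balance against the $Z$-factors that appear when rewriting $c_{\tilde{s'}}$ in terms of $c_{\tilde s}$. The hypothesis $x<sx$ or $x<xs$ enters only through Lemma~\ref{14}, to guarantee $x\le\lambda'$, which is precisely what allows the common middle subexpression on both sides.
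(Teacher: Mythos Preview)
Your opening reductions match the paper's: dispose of $s\lambda s=\lambda$, assume WLOG $s\lambda<\lambda$, and use Lemma~\ref{14} to get $x\le s\lambda=\lambda'$. After that the two approaches diverge. The paper immediately invokes the composition rule $r_{x,\lambda}=r_{x,s\lambda}\circ r_{s\lambda,\lambda}$ and $r_{x,s\lambda s}=r_{x,s\lambda}\circ r_{s\lambda,s\lambda s}$ to reduce to the \emph{one-step} identity $r_{s\lambda,\lambda}(h_\lambda)=r_{s\lambda,s\lambda s}(h_{s\lambda s})$; in your notation this is the special case $x=s\lambda$, $x_0=\lambda_0'$, $M=T_{\tilde\lambda_0'}$. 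That reduction is the key simplification you are missing.

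Keeping a general $M$ and claiming the $R[Z]$-factors agree is where your argument breaks. After your push-to-the-right, the factor on the $s\lambda s$-side is (up to sign) $x^{-1}\bigl(s(T)z_0\bigr)\cdot\tau^{-1}(c_{\tilde{s'}})$, equivalently $s(T)\,z_0\,x_0(c_{\tilde{s'}})$ before applying $x^{-1}$, with $T=\sum t_{\tilde\lambda}$. This depends on $x_0$, while the $\lambda$-side factor $T\,c_{\tilde s}$ does not. So the asserted equality of $R[Z]$-factors cannot follow from the relations $c_{\tilde st}=c_{\tilde s}t$ and $\tilde w(c_{\tilde s})=c_{\tilde w\tilde s\tilde w^{-1}}$ alone, and your formula for $c_{\tilde{s'}}$ in terms of $c_{\tilde s^{-1}}$ and $z_2$ does not cancel the $x_0$-dependence. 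More fundamentally, even in the good case $x_0=\lambda_0'$ the identity does \emph{not} hold term-by-term in $\tilde\lambda$: it needs the fact that the set $\pi^{-1}(\lambda)\cap C$ is stable under $Z$-conjugation (equivalently that $\sum_{\tilde\lambda}\tilde\lambda$ is centralized by $Z$), which is exactly the property of a conjugacy class that the paper's proof isolates and uses in its final line. Your sketch never invokes this $Z$-invariance, so the ``balancing'' you describe cannot be completed as stated. The cleanest repair is to adopt the paper's reduction via $r_{u,v}r_{v,w}=r_{u,w}$ and then carry out the short computation at $x=s\lambda$, where $M$ is a single basis element and the $Z$-invariance of $h_\lambda$ finishes the job.
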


\begin{proof}
Without loss of generality, we may assume $x<sx$.

If $s\lambda s=\lambda$, then it is clearly true.

If $s\lambda s\neq\lambda$, then without loss of generality, we may assume $s\lambda<\lambda$. In this case, $x\leq s\lambda$ by Lemma \ref{14}. Thus
$$r_{x,\lambda}(h_{\lambda})=r_{x,s\lambda}(r_{s\lambda,\lambda}(h_{\lambda})),\quad r_{x,s\lambda s}(h_{s\lambda s})=r_{x,s\lambda}(r_{s\lambda,s\lambda s}(h_{s\lambda s})).$$
It suffices to show that $r_{s\lambda,\lambda}(h_{\lambda})=r_{s\lambda,s\lambda s}(h_{s\lambda s})$. 

Since $c_{\tilde{s}^{-1}}\in R[Z]$, we may assume that
$$c_{\tilde{s}^{-1}}=\sum_{t\in Z}b_t t,\quad\text{for some}\ \ b_t\in R.$$
Then
\begin{align}\nonumber
&r_{s\lambda,\lambda}(h_{\lambda})=r_{s\lambda,s\lambda s}(h_{s\lambda s})\\\nonumber
\Longleftrightarrow&\sum_{\tilde{\lambda}\in\pi^{-1}(\lambda)\cap C}-c_{\tilde{s}^{-1}}T_{\tilde{s}\tilde{\lambda}}=\sum_{\tilde{\lambda}\in\pi^{-1}(\lambda)\cap C}-T_{\tilde{s}\tilde{\lambda}}c_{\tilde{s}^{-1}}\\\nonumber
\Longleftrightarrow&\sum_{\tilde{\lambda}\in\pi^{-1}(\lambda)\cap C}-c_{\tilde{s}^{-1}}T_{\tilde{s}\tilde{\lambda}}=\sum_{\tilde{\lambda}\in\pi^{-1}(\lambda)\cap C}-T_{\tilde{s}\tilde{\lambda}}(\tilde{s}c_{\tilde{s}^{-1}}\tilde{s}^{-1})\\\nonumber
\Longleftrightarrow&\sum_{\tilde{\lambda}\in\pi^{-1}(\lambda)\cap C}-(\sum_{t\in Z}b_t t)T_{\tilde{s}\tilde{\lambda}}=\sum_{\tilde{\lambda}\in\pi^{-1}(\lambda)\cap C}-T_{\tilde{s}\tilde{\lambda}}(\tilde{s}(\sum_{t\in Z}b_t t)\tilde{s}^{-1})\\\nonumber
\Longleftrightarrow&\sum_{t\in Z}b_t\sum_{\tilde{\lambda}\in\pi^{-1}(\lambda)\cap C}-T_{t\tilde{s}\tilde{\lambda}}=\sum_{t\in Z}b_t\sum_{\tilde{\lambda}\in\pi^{-1}(\lambda)\cap C}-T_{\tilde{s}\tilde{\lambda}(\tilde{s}t\tilde{s}^{-1})}.\nonumber
\end{align}
We want to show the last equation. It suffices to show that
\begin{align}\nonumber
&\sum_{\tilde{\lambda}\in \pi^{-1}(\lambda)\cap C}t\tilde{s}\tilde{\lambda}=\sum_{\tilde{\lambda}\in \pi^{-1}(\lambda)\cap C}\tilde{s}\tilde{\lambda}(\tilde{s}t\tilde{s}^{-1})\\\nonumber
\Longleftrightarrow&\sum_{\tilde{\lambda}\in \pi^{-1}(\lambda)\cap C}\tilde{s}^{-1}t\tilde{s}\tilde{\lambda}=\sum_{\tilde{\lambda}\in \pi^{-1}(\lambda)\cap C}\tilde{\lambda}(\tilde{s}t\tilde{s}^{-1})\\\nonumber
\Longleftrightarrow&(\tilde{s}^{-1}t\tilde{s})(\sum_{\tilde{\lambda}\in \pi^{-1}(\lambda)\cap C}\tilde{\lambda})(\tilde{s}^{-1}t\tilde{s})^{-1}=\sum_{\tilde{\lambda}\in \pi^{-1}(\lambda)\cap C}\tilde{\lambda}\nonumber
\end{align}
for any $t\in Z$. The last equation holds because $\sum_{\tilde{\lambda}\in \pi^{-1}(\lambda)\cap C}\tilde{\lambda}$ is fixed by $Z$.
\end{proof}

For $w,w'\in W$, we write $w\stackrel{s}\rightarrow w'$ if $w'=sws$ and $\ell(w')=\ell(w)-2$.

For $w,w'\in W$, we write $w\stackrel{s}\sim w'$ if $w'=sws$, $\ell(w')=\ell(w)$, and $sw>w$ or $ws>w$. We write $w\sim w'$ if $\exists$ a sequence $$w=w_0,w_1,...,w_n=w'$$ such that $w_{i-1}\stackrel{s_i}{\sim} w_i$ for every $i$ and some $s_i\in S^{\text{aff}}$. If $\lambda,\lambda'$ are in the same finite conjugacy class in $W$, then $\lambda'=w\lambda w^{-1}$ for some $w\in W$. Since $W=\Lambda\rtimes W_0$, we can write $w=w_0\lambda''$ for some $w_0\in W_0$ and $\lambda''\in\Lambda$. Thus by commutativity of $\Lambda$, $\lambda'=(w_0\lambda'')\lambda(w_0\lambda'')^{-1}=w_0\lambda w_0^{-1}$. Therefore, $\lambda\sim\lambda'.$

\begin{lem}\label{7}
Let $C$ be a finite conjugacy class in $W(1)$ and $\lambda\in\pi(C)$. Let $x,x'\in W$ and $x\leq\lambda$. Suppose $x\sim x'$ is given by 
$$x=x_0\stackrel{s_1}\sim x_1\stackrel{s_2}\sim\dotsm\stackrel{s_n}\sim x_n=x',$$ for some $s_i\in S^{\text{aff}}$. Let $w=s_n\dotsm s_1$. Then $\exists \lambda'\sim\lambda, \tilde{w}\in W(1)$ a lifting of $w$, such that $x'\leq\lambda'$ and 
$$\tilde{w}\bullet(r_{x,\lambda}(h_{\lambda}))=r_{x',\lambda'}(h_{\lambda'}).$$
\end{lem}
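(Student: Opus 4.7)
I proceed by induction on the chain length $n$. The base case $n=0$ is trivial: take $\lambda'=\lambda$ and $\tilde w=1$. For $n\geq 1$ it suffices to handle the single-step case $n=1$ and compose. From $x \stackrel{s_1}{\sim} x_1$ the single-step result produces $(\lambda_1,\tilde s_1)$ with $\lambda_1\sim\lambda$, $x_1\leq\lambda_1$, and $\tilde s_1\bullet r_{x,\lambda}(h_\lambda)=r_{x_1,\lambda_1}(h_{\lambda_1})$; applying the inductive hypothesis to the tail $x_1\stackrel{s_2}{\sim}\cdots\stackrel{s_n}{\sim}x'$ with $\lambda_1$ in place of $\lambda$ yields a $\tilde w'$ lifting $s_n\cdots s_2$, and $\tilde w:=\tilde w'\tilde s_1$ lifts $w=s_n\cdots s_1$ and satisfies the desired equation because $\bullet$ is a group action on $\mathcal{H}_R(0,c_{\tilde s})$.

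For the single step I take $\lambda':=s\lambda s$. Since $\Lambda$ is normal in $W$ and $W$-conjugation on $\Lambda$ factors through the length-preserving $W_0$-action, $\lambda'$ has the same length as $\lambda$ and lies in the same $W$-conjugacy class, so $\lambda'\sim\lambda$. Assuming without loss of generality $sx>x$, I verify $x'=sxs\leq\lambda'=s\lambda s$ by iterating Lemma \ref{14}: the first application gives $\max\{x,sx\}=sx\leq\max\{\lambda,s\lambda\}$; a subsequent right-multiplication by $s$, combined with the equality $\ell(s\lambda s)=\ell(\lambda)$ (which forces $s\lambda s<\max\{s\lambda,\lambda s\}$), yields $x'\leq\lambda'$ after a short case split on the signs of $\ell(s\lambda)-\ell(\lambda)$ and $\ell(\lambda s)-\ell(\lambda)$.

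For the operator identity the crucial reindexing is $\tilde s\bullet h_\lambda=h_{s\lambda s}=h_{\lambda'}$, which is immediate because $W(1)$-conjugation permutes $C$. Granting this, the identity reduces to the operator-level intertwining $\tilde s\bullet\circ r_{x,\lambda}=r_{x',\lambda'}\circ\tilde s\bullet$ on the span of $\{T_{\tilde\lambda}:\tilde\lambda\in\pi^{-1}(\lambda)\cap C\}$. I verify this on basis elements: expand $r_{x,\lambda}(T_{\tilde\lambda})$ using a reduced expression $\underline\lambda$ adapted to $s$ (for instance, when $s\lambda<\lambda$, write $\underline\lambda=s\cdot s_{i_2}\cdots s_{i_n}\tau$ and use $\underline{\lambda'}=s_{i_2}\cdots s_{i_n}(\tau s\tau^{-1})\tau$ for $\lambda'$), fully expand the factors $c_{\tilde s_{i_k}}=\sum_{t\in Z}c(t)T_t$ so that each summand becomes a single basis element $T_{\tilde v}$ (no quadratic relation is triggered, since the $T$-factors are a non-decreasing subexpression of length $\ell(x)$), and conjugate each index by $\tilde s$; the parameter rule $\tilde s(c_{\tilde s_0})=c_{\tilde s\tilde s_0\tilde s^{-1}}$ together with a bijective reindexing of the $Z$-parameters reproduces the analogous expansion of $r_{x',\lambda'}(T_{\tilde s\tilde\lambda\tilde s^{-1}})$. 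If needed, a preliminary application of Lemma \ref{6} (valid because the Bruhat argument above also delivers $x\leq\lambda'$) first replaces $r_{x,\lambda}(h_\lambda)$ by $r_{x,\lambda'}(h_{\lambda'})$, putting $\lambda$ and $\lambda'$ on a symmetric footing.

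The main obstacle is the term-by-term matching in the intertwining. The $\bullet$-action is only $R$-linear on the basis and is not an algebra homomorphism, because $\tilde s\tilde s_0\tilde s^{-1}$ need not lie in $S^{\text{aff}}(1)$ and the quadratic relations are postulated only for those distinguished generators; so I cannot simply conjugate the product $r_{x,\lambda}(T_{\tilde\lambda})$ factor-by-factor in the algebra. The expansion strategy above sidesteps this by first resolving all $c$-factors into $Z$-combinations of $T_t$'s, whereupon the surviving $T$-products collapse to single basis elements to which $\bullet$ applies cleanly; the remaining algebraic bookkeeping is then in the spirit of the proofs of Lemma \ref{1} and Lemma \ref{6}, using the centrality of $\sum_{\tilde\lambda\in\pi^{-1}(\lambda)\cap C}\tilde\lambda$ under $Z$-conjugation.
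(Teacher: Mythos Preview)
There is a genuine gap in the operator-identity step. You correctly reduce to a single step, correctly take $\lambda'=s\lambda s$, and correctly observe $\tilde s\bullet h_\lambda=h_{\lambda'}$. But you then try to establish the intertwining $\tilde s\bullet r_{x,\lambda}(T_{\tilde\lambda})=r_{x',\lambda'}(T_{\tilde s\tilde\lambda\tilde s^{-1}})$ \emph{term by term}, by writing $\underline\lambda=s\,s_{i_2}\cdots s_{i_n}\tau$ and $\underline{\lambda'}=s_{i_2}\cdots s_{i_n}(\tau s\tau^{-1})\tau$, expanding the $c$'s, and conjugating indices. This does not produce a matching: a non-decreasing subexpression $(0,e_2,\dots,e_n)$ for $x$ in $\underline\lambda$ gives $x=s_{i_2}^{e_2}\cdots s_{i_n}^{e_n}\tau$, but $x'=sxs=s\cdot s_{i_2}^{e_2}\cdots s_{i_n}^{e_n}\cdot(\tau s\tau^{-1})\tau$ has a leading $s$ with no home in $\underline{\lambda'}$, so there is no corresponding subexpression $(f_2,\dots,f_n,f_{n+1})$ to compare with. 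The promised ``bijective reindexing of the $Z$-parameters'' is therefore not visible, and your sketch does not show why the parameter constraints force equality. Your fallback to Lemma~\ref{6} is the right instinct, but you do not explain what to do after replacing $r_{x,\lambda}(h_\lambda)$ by $r_{x,\lambda'}(h_{\lambda'})$.

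The paper avoids the term-by-term conjugation entirely. It converts the $\bullet$-action into honest left and right multiplication by $T_{\tilde s}$, which by Proposition~\ref{2} commutes with the $r$-operator whenever lengths add. When $s\lambda>\lambda$ this yields in one line
\[
T_{\tilde s}\,r_{x,\lambda}(h_\lambda)=r_{sx,s\lambda}(T_{\tilde s}h_\lambda)=r_{sx,s\lambda}(h_{s\lambda s}T_{\tilde s})=r_{sxs,s\lambda s}(h_{s\lambda s})\,T_{\tilde s},
\]
and reading off coefficients over $\pi^{-1}(sx)$ gives $\tilde s\bullet r_{x,\lambda}(h_\lambda)=r_{x',s\lambda s}(h_{s\lambda s})$. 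When $s\lambda<\lambda$, the paper first invokes Lemma~\ref{6}---which genuinely requires the \emph{sum} over $\pi^{-1}(\lambda)\cap C$, not a single $\tilde\lambda$---to pass to $r_{x,s\lambda s}(h_{s\lambda s})$, after which the previous computation applies (now with $\lambda'=\lambda$). This is the missing mechanism: Proposition~\ref{2} is what lets you move $T_{\tilde s}$ across $r$, and is the clean substitute for your attempted direct conjugation of the expansion.
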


\begin{proof}
It suffices to prove the case where $x\stackrel{s} \sim x'$ for some $s\in S^{\text{aff}}$, i.e. $x'=sxs$. Without loss of generality, we may assume that $sx>x$.
\begin{itemize}
\item
If $s\lambda>\lambda$, then by Lemma \ref{14} $sxs\leq s\lambda s$. It is enough to show that $T_{\tilde{s}}r_{x,\lambda}(h_{\lambda})=r_{sxs,s\lambda s}(h_{s\lambda s})T_{\tilde{s}}$ for any $\tilde{s}\in S^{aff}(1)$ with $\pi(\tilde{s})=s$. But
\begin{align}\nonumber
T_{\tilde{s}}r_{x,\lambda}(h_{\lambda})
&=r_{sx,s\lambda}(T_{\tilde{s}}h_{\lambda})\\\nonumber
&=r_{sx,s\lambda}(h_{s\lambda s}T_{\tilde{s}})\\\nonumber
&=r_{sxs,s\lambda s}(h_{s\lambda s})T_{\tilde{s}}.\nonumber
\end{align}
The second the equality holds because $\tilde{s}\bullet h_{\lambda}=h_{s\lambda s}$, and the other equalities hold by Proposition \ref{2}. Therefore,
$$\tilde{s}\bullet(r_{x,\lambda}(h_{\lambda}))=r_{x',s\lambda s}(h_{s\lambda s}).$$
\item
If $s\lambda<\lambda$, then by Lemma \ref{14} $sxs<sx\leq \lambda$ and $x\leq s\lambda< s\lambda s$. Therefore, for any $\tilde{s}\in S^{aff}(1)$ with $\pi(\tilde{s})=s$, we have
\begin{align}\nonumber
T_{\tilde{s}}(r_{x,\lambda}(h_{\lambda}))
&=T_{\tilde{s}}(r_{x,s\lambda s}(h_{s\lambda s}))\\\nonumber
&=(r_{sxs,\lambda}(h_{\lambda})).\nonumber
\end{align}
The first equality holds by Lemma \ref{6} and the second equality holds by a similar proof as the last step. Thus
$$\tilde{s}\bullet(r_{x,\lambda}(h_{\lambda}))=r_{x',\lambda}(h_{\lambda}).$$
\end{itemize}

This finishes the proof.
\end{proof}

Recall that $\nu$ is the homomorphism which defines the action of $\Lambda$. Set $\Lambda^+=\{\lambda\in\Lambda| \beta(\nu(\lambda))\geq 0, \forall \beta\in\Sigma^+\}$ where $\Sigma^+$ is the set of positive roots in $\Sigma$. A element in $\Lambda$ is called dominant if it is contained in $\Lambda^+$. Let $\mu_0\in\Lambda^+$ and $\lambda\in\Lambda$. Let $\lambda_0$ be the unique dominant element in $\{\lambda'\in\Lambda| \lambda'\sim\lambda\}$. Suppose $\mu_0\leq\lambda$, then by Theorem $1.3$ in \cite{hh} and Proposition $6.1$ in \cite{th}, $\mu_0\leq\lambda_0$. We have the following result.

\begin{lem}\label{8}
Let $\mu_0\in\Lambda^+ and \lambda\in\Lambda$. Let $\lambda_0$ be the unique dominant element in $\{\lambda'\in\Lambda| \lambda'\sim\lambda\}$. Suppose $\mu_0\leq\lambda$, then $\exists$ a sequence
$$\lambda_0,\lambda_1,\dotsm,\lambda_n=\lambda$$
such that $\lambda_{i-1}\stackrel{s_i}\sim\lambda_i$ for every $i$ and some $s_i\in S_0$, and $\mu_0\leq\lambda_i$ for all $i$.
\end{lem}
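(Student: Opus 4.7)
The plan is to induct on $\ell(w)$, where $w\in W_0$ is of minimal length satisfying $w\cdot\nu(\lambda_0)=\nu(\lambda)$ under the natural $W_0$-action on $\Lambda$. The base case $\ell(w)=0$ is trivial: $\lambda=\lambda_0$, so the one-term chain $(\lambda_0)$ works and $\mu_0\leq\lambda_0$ is the hypothesis.

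For the inductive step, $\lambda\neq\lambda_0$ fails to be dominant (the dominant chamber being a fundamental domain for $W_0$), so some $s\in S_0$ satisfies $\alpha_s(\nu(\lambda))<0$. Set $\lambda':=s\lambda s$. I will verify (i) $\lambda'\stackrel{s}\sim\lambda$; (ii) a minimal element of $W_0$ sending $\lambda_0$ to $\lambda'$ has length $\ell(w)-1$; and (iii) $\mu_0\leq\lambda'$. For (i), $\ell(\lambda')=\ell(\lambda)$ because conjugation by $W_0$ is length-preserving on $\Lambda$, and $\alpha_s(\nu(\lambda'))=-\alpha_s(\nu(\lambda))>0$, so Lemma~\ref{13} gives $\ell(s\lambda')>\ell(\lambda')$, whence $s\lambda'>\lambda'$. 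For (ii), the identity $\alpha_s(\nu(\lambda))=(w^{-1}\alpha_s)(\nu(\lambda_0))<0$ combined with dominance of $\nu(\lambda_0)$ forces $w^{-1}\alpha_s\in\Sigma^-$, so $s$ is a left descent of $w$ and $\ell(sw)=\ell(w)-1$; since $(sw)\cdot\nu(\lambda_0)=\nu(\lambda')$, the claim follows.

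The core of the proof is (iii). Since $\alpha_s(\nu(\lambda))<0$, Lemma~\ref{13} yields $s\lambda<\lambda$; since $\mu_0$ is dominant, the same lemma yields $\mu_0<s\mu_0$. Applying the first bullet of Lemma~\ref{14} to $\mu_0\leq\lambda$ with reflection $s$:
$$\mu_0=\min\{\mu_0,s\mu_0\}\leq\min\{\lambda,s\lambda\}=s\lambda.$$
Next, $\ell(s\lambda\cdot s)=\ell(s\lambda s)=\ell(\lambda)=\ell(s\lambda)+1$, so $s\lambda<\lambda'=s\lambda\cdot s$. Now the second bullet of Lemma~\ref{14} applied to $\mu_0\leq s\lambda$ with $s$ gives
$$\mu_0\leq\max\{\mu_0,\mu_0 s\}\leq\max\{s\lambda,s\lambda\cdot s\}=\lambda',$$
establishing (iii). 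The inductive hypothesis applied to $\lambda'$ then supplies a chain $\lambda_0,\lambda_1,\dots,\lambda_{n-1}=\lambda'$ with $\mu_0\leq\lambda_i$ for all $i$ and $\lambda_{i-1}\stackrel{s_i}\sim\lambda_i$; appending $\lambda_n:=\lambda$ with $s_n:=s$ completes the required chain.

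The principal obstacle is step (iii). The existence of a chain of $W_0$-conjugations from $\lambda_0$ to $\lambda$ with unchanged length at each step is essentially a classical argument for $W_0$-orbits in $\Lambda$, but maintaining $\mu_0\leq\lambda_i$ throughout requires carefully pairing the sign of $\alpha_s(\nu(\lambda))$ with the dominance of $\mu_0$ so that the $\min$/$\max$ ambiguities in Lemma~\ref{14} resolve favorably, as orchestrated above.
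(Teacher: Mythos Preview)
Your proof is correct and follows essentially the same approach as the paper's: both induct on $\ell(w)$ for $w\in W_0$ conjugating $\lambda_0$ to $\lambda$, locate $s\in S_0$ with $s\lambda<\lambda$ and $sw<w$, and then use $s\mu_0>\mu_0$ together with Lemma~\ref{14} to obtain $\mu_0\leq s\lambda<s\lambda s$. The only cosmetic difference is the direction from which $s$ is found---the paper takes $s$ as the leftmost letter of a reduced expression of $w$ and then argues geometrically (via Lemma~\ref{13}) that $s\lambda<\lambda$, whereas you take $s$ from the non-dominance of $\lambda$ and then deduce $sw<w$ from $w^{-1}\alpha_s\in\Sigma^-$; your second invocation of Lemma~\ref{14} in step~(iii) is in fact unnecessary, since $\mu_0\leq s\lambda<s\lambda s=\lambda'$ already suffices by transitivity.
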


\begin{proof}
Since $\lambda\sim\lambda_0$, there exists $w\in S_0$ such that $\lambda=w\lambda_0 w^{-1}$. We prove the statement by induction on $l=\ell(w)$.

If $l=0,1$, then it is obvious.

Now suppose that the statement is correct for $l<k$, and we consider the case when $l=k$. Let $w=s_{i_k}\dotsm s_{i_1}$ and it suffices to show that $\mu_0\leq s_{i_k}\lambda s_{i_k}$.

If $s_{i_k}\lambda s_{i_k}=\lambda$, then it is obvious.

If $s_{i_k}\lambda s_{i_k}\neq \lambda$, then by $s_{i_k}w<w$ and Lemma \ref{13}, $w(\mathfrak{A}_0)$ and $\mathfrak{A}_0$ are on different sides of $H_{s_{i_k}}$. On the other hand, $s_{i_k}\lambda s_{i_k}\neq \lambda$, then $\nu(\lambda)=\nu(w\lambda_0 w^{-1})=w(\nu(\lambda_0))\in w(\bar{\mathfrak{C}}_0)\backslash H_{s_{i_k}}$. Thus $\lambda(\mathfrak{A}_0)=\mathfrak{A}_0+\nu(\lambda)$ and $\mathfrak{A}_0$ are on different sides of $H_{s_{i_k}}$, i.e. $s_{i_k}\lambda<\lambda$ by Lemma \ref{13}. We also have $s_{i_k}\mu_0>\mu_0$, thus by Lemma \ref{14} $\mu_0\leq s_{i_k}\lambda<s_{i_k}\lambda s_{i_k}$, which finishes the proof. 
\end{proof}

\begin{thm}\label{9}
Let $C$ be a finite conjugacy class in $W(1)$, $\lambda_1,\lambda_2\in \pi(C)$ and $x\in W$. Suppose $x\leq\lambda_1,\lambda_2$, then
$$r_{x,\lambda_1}(h_{\lambda_1})=r_{x,\lambda_2}(h_{\lambda_2}).$$
\end{thm}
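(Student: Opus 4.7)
The plan is to reduce the equality $r_{x,\lambda_1}(h_{\lambda_1})=r_{x,\lambda_2}(h_{\lambda_2})$ to a statement about a common anchor, namely the unique dominant translation $\lambda_0$ in the $W_0$-orbit containing $\lambda_1$ and $\lambda_2$. Since $\pi(C)\subseteq\Lambda$ is a single finite $W$-conjugacy class, it coincides with a single $W_0$-orbit (as explained just before Lemma \ref{7}), so $\lambda_1\sim\lambda_2$; let $\lambda_0\in\Lambda^+$ be the dominant element of this orbit. It then suffices to prove $r_{x,\lambda_i}(h_{\lambda_i})=r_{x,\lambda_0}(h_{\lambda_0})$ for each $i\in\{1,2\}$ and conclude by transitivity.

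To establish the equality for a fixed $i$, I would first invoke Theorem $1.3$ of \cite{hh} together with Proposition $6.1$ of \cite{th} (the same inputs already used before Lemma \ref{8}) to produce a dominant translation $\mu_0\in\Lambda^+$ with $x\leq\mu_0\leq\lambda_i$. Applying Lemma \ref{8} to this $\mu_0$ yields a chain
$$\lambda_0=\nu_0\stackrel{s_1}{\sim}\nu_1\stackrel{s_2}{\sim}\cdots\stackrel{s_n}{\sim}\nu_n=\lambda_i$$
of $S_0$-moves with $\mu_0\leq\nu_j$ for every $j$, so in particular $x\leq\mu_0\leq\nu_j$ at every intermediate point. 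At step $j$, provided the further length hypothesis is satisfied, Lemma \ref{6} with $\lambda=\nu_{j-1}$ and $s=s_j$ gives $r_{x,\nu_{j-1}}(h_{\nu_{j-1}})=r_{x,\nu_j}(h_{\nu_j})$, and composing these along the chain yields $r_{x,\lambda_i}(h_{\lambda_i})=r_{x,\lambda_0}(h_{\lambda_0})$.

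The main obstacle is the extra hypothesis of Lemma \ref{6} that $x<s_jx$ or $x<xs_j$, which can fail exactly when $s_j$ is a \emph{two-sided descent} of $x$ (both $s_jx<x$ and $xs_j<x$). I would handle this by an auxiliary induction on $\ell(x)$, with the trivial base case $\ell(x)=0$: at such a step, the strong exchange condition produces $y\in W$ with $x=s_jys_j$ and $\ell(y)=\ell(x)-2$, and Proposition \ref{2} then lets me factor $r_{x,\nu_{j-1}}(h_{\nu_{j-1}})$ as $T_{\tilde{s}_j}\,r_{y,\nu_{j-1}'}(\cdots)\,T_{\tilde{s}_j}$ for a suitable $\nu_{j-1}'\leq\nu_{j-1}$ obtained by peeling an $s_j$ off $\nu_{j-1}$ on each side, reducing the step to an instance of the theorem with $y$ in place of $x$ that is covered by the inductive hypothesis. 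An alternative would be to refine Lemma \ref{8} so that the $S_0$-chain it produces avoids two-sided descents of $x$ altogether; in either route, reconciling the combinatorics of the $S_0$-chain with these factorisations is the most delicate part of the argument.
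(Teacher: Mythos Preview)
Your plan has a genuine gap at the step where you ``produce a dominant translation $\mu_0\in\Lambda^+$ with $x\le\mu_0\le\lambda_i$''. The passage before Lemma~\ref{8} says only that a dominant $\mu_0$ with $\mu_0\le\lambda$ automatically satisfies $\mu_0\le\lambda_0$; it does not manufacture a dominant translation sandwiched between an arbitrary $x$ and $\lambda_i$, and in general no such $\mu_0$ exists. In type $\tilde A_1$ with $S^{\mathrm{aff}}=\{s_0,s_1\}$ and dominant $\lambda_0=s_0s_1s_0s_1$, take $\lambda_i=s_1s_0s_1s_0$ and $x=s_1s_0s_1$. Then $x\le\lambda_i$ (and also $x\le\lambda_0$), but the only dominant translations $\le\lambda_i$ are $1$ and $s_0s_1$, neither of which lies above $x$; your chain argument therefore cannot start. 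The peeling fix you sketch also fails: for a translation $\nu$ and $s\in S_0$ with $s\nu s\ne\nu$, exactly one of $s\nu<\nu$ and $\nu s<\nu$ holds, so you cannot strip $s_j$ from \emph{both} sides of $\nu_{j-1}$ to obtain a shorter $\nu_{j-1}'$, and any one-sided factorisation via Proposition~\ref{2} leaves the world of translations and destroys the $h_\lambda$ structure you need.

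The paper avoids both obstacles by inducting on $d=\ell(\lambda_1)-\ell(x)$ and moving $x$ rather than chaining the $\lambda$'s directly at level $x$. If $x\notin\Lambda$, Theorem~\ref{12} supplies a length-preserving sequence $x\sim x'$ ending at some $x'$ that admits a two-sided \emph{ascent} $s'$; Lemma~\ref{7} transports the problem to $x'$ (replacing each $\lambda_j$ by a conjugate $\lambda_j'$), after which one factors $r_{x',\lambda_j'}=r_{x',s'x'}\circ r_{s'x',\lambda_j'}$ and invokes the inductive hypothesis at the strictly longer element $s'x'$. If $x\in\Lambda$, one first conjugates $x$ to its dominant representative $x_0\in\Lambda^+$ via Lemma~\ref{7}, and only then runs the chain of Lemma~\ref{8} with $\mu_0=x_0$: because $x_0$ is dominant, every $s_j\in S_0$ satisfies $s_jx_0>x_0$, so the hypothesis of Lemma~\ref{6} is met at every step and the descent obstruction never arises.
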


\begin{proof}
We prove it by induction on $d=\ell(\lambda_1)-\ell(x)=\ell(\lambda_2)-\ell(x)$.

If $d=0$, then it is obvious since $x=\lambda_1=\lambda_2.$

Now suppose $d>0$.
\begin{itemize}
\item
If $x\notin \Lambda$, then by Theorem \ref{12} there exist $s_1,s_2,\dotsm,s_n,s'\in S^{\text{aff}}$ such that $s_is_{i-1}\dotsm s_1xs_1\dotsm s_{i-1}s_i\stackrel{s_{i+1}}\sim s_{i+1}s_is_{i-1}\dotsm s_1xs_1\dotsm s_{i-1}s_is_{i+1}$ for all $i$ and $s's_ns_{n-1}\dotsm s_1xs_1\dotsm s_{n-1}s_ns'\stackrel{s'}\rightarrow s_ns_{n-1}\dotsm s_1xs_1\dotsm s_{n-1}s_n$. Let $\tilde{w}\in W^{\text{aff}}(1)$ be a lifting of $s_ns_{n-1}\dotsm s_1$ and $x'=s_ns_{n-1}\dotsm s_1xs_1\dotsm s_{n-1}s_n$. Then by Lemma \ref{7},
$$\tilde{w}\bullet(r_{x,\lambda_1}(h_{\lambda_1}))=r_{x',\lambda_1'}(h_{\lambda_1'}),\quad \tilde{w}\bullet(r_{x,\lambda_2}(h_{\lambda_2}))=r_{x',\lambda_2'}(h_{\lambda_2'}),$$
for some $\lambda_1'\sim\lambda_1, \lambda_2'\sim\lambda_2$. We have $\lambda_1'\sim\lambda_2'$ because $\lambda_1\sim\lambda_2$.

It suffices to show that $r_{x',\lambda_1'}(h_{\lambda_1'})=r_{x',\lambda_2'}(h_{\lambda_2'}).$ It can be easily checked by Lemma \ref{14} that $s'x'\leq \lambda_j'$ or $s'\lambda_j's'$ for $j=1,2$. Without loss of generality, we may assume that $s'x'\leq \lambda_1',\lambda_2'$, then
$$r_{x',\lambda_1'}(h_{\lambda_1'})=r_{x',s'x'}(r_{s'x',\lambda_1'}(h_{\lambda_1'}))=r_{x',s'x'}(r_{s'x',\lambda_2'}(h_{\lambda_2'}))=r_{x',\lambda_2'}(h_{\lambda_2'}),$$
where the second equality holds by induction. If $s'x'\leq s'\lambda_j's'$, then $x'<s'\lambda_j's'$. By Lemma \ref{6}, $$r_{x',\lambda_j'}(h_{\lambda_j'})=r_{x',s'\lambda_j's'}(h_{s'\lambda_j's'})=r_{x',s'x'}(r_{s'x',s'\lambda_j's'}(h_{s'\lambda_j's'})),$$ and we can apply a similar proof as above.
\item
If $x\in \Lambda$, then there exist $w\in S_0$ such that $x_0=wxw^{-1}\in\Lambda^+$. Let $\tilde{w}\in W(1)$ be a lifting of $w$, then by Lemma \ref{7},
$$\tilde{w}\bullet(r_{x,\lambda_1}(h_{\lambda_1}))=r_{x_0,\lambda_1'}(h_{\lambda_1'}),\quad \tilde{w}\bullet(r_{x,\lambda_2}(h_{\lambda_2}))=r_{x_0,\lambda_2'}(h_{\lambda_2'}),$$
for some $\lambda_1'\sim\lambda_1,\lambda_2'\sim\lambda_2$. We have $\lambda_1'\sim\lambda_2'$ because $\lambda_1\sim\lambda_2$.

It suffices to show that $r_{x_0,\lambda_1'}(h_{\lambda_1'})=r_{x_0,\lambda_2'}(h_{\lambda_2'})$. By Lemma \ref{7} and \ref{8}, $r_{x_0,\lambda_1'}(h_{\lambda_1'})=r_{x_0,\lambda_0}(h_{\lambda_0})=r_{x_0,\lambda_2'}(h_{\lambda_2'})$ where $\lambda_0\in\Lambda^+$ and $\lambda_0\sim\lambda_1',\lambda_0\sim\lambda_2'$.
\end{itemize}

This finishes the proof.
\end{proof}

\section{Center of $\mathcal{H}_R(0,c_{\tilde{s}})$}

Let $C$ be a finite conjugacy class in $W(1)$. Then $C\subset\Lambda(1), \pi(C)\subset\Lambda$ and there is a unique element $\lambda_0\in\pi(C)\cap\Lambda^+$. Set $$\text{Adm}(C)=\text{Adm}(\lambda_0)=\{w\in W| w\leq\lambda\ \ \text{for some}\ \ \lambda\in\pi(C)\}.$$ We define
$$h_C=\sum_{w\in \text{Adm}(C)} h_w,$$
where $h_w=h_{w,C}$ if $w\in\pi(C)$ and otherwise $h_{w}=r_{w,\lambda}(h_{\lambda})$ for any $\lambda\in \pi(C)$ with $\lambda>w$. By Theorem \ref{9}, $h_C$ is well defined.

\begin{lem}
Suppose $C$ be a finite conjugacy class in $W(1)$. Then $h_C\in\mathcal{Z}_R(0,c_{\tilde{s}})$. 
\end{lem}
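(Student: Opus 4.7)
The plan is to check that $h_C$ commutes with each algebra generator of $\mathcal{H}_R(0,c_{\tilde s})$: the elements $T_{\tilde\tau}$ for $\tilde\tau\in\Omega(1)$ (which contains $Z$) and the elements $T_{\tilde s}$ for $\tilde s\in S^{\text{aff}}(1)$.

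The $\Omega(1)$ case is essentially formal. Since $\ell(\tilde\tau)=0$, one has $T_{\tilde\tau}T_{\tilde w}T_{\tilde\tau}^{-1}=\tilde\tau\bullet T_{\tilde w}$; translating the reduced expression and lifting used in the definition of $r_{w,\lambda}$ then shows $\tilde\tau\bullet h_w=h_{\tau w\tau^{-1}}$. Because $C$ is a conjugacy class and $\tau$-conjugation preserves both length and the Bruhat order, $\text{Adm}(C)$ is stable under $\tau$-conjugation, so the sum defining $h_C$ is invariant.

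The main case is $T_{\tilde s}h_C=h_CT_{\tilde s}$ for $\tilde s\in S^{\text{aff}}(1)$. I would write $T_{\tilde s}h_C-h_CT_{\tilde s}=\sum_{w\in\text{Adm}(C)}(T_{\tilde s}h_w-h_wT_{\tilde s})$ and show this vanishes by a pairwise cancellation argument. The quadratic relation $T_{\tilde s}^2=c_{\tilde s}T_{\tilde s}$ gives $T_{\tilde s}T_{\tilde w}=T_{\tilde s\tilde w}$ when $sw>w$ and $T_{\tilde s}T_{\tilde w}=c_{\tilde s}T_{\tilde w}$ when $sw<w$, with symmetric formulas on the right. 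Proposition \ref{2} then identifies $T_{\tilde s}h_w$ (when $sw>w$) with an $r$-operator applied to a reduced expression lengthened by $s$, and Theorem \ref{9} together with Lemma \ref{6} let me change the defining $\lambda\in\pi(C)$ of $h_w$ freely -- in particular choosing a reduced expression of $\lambda$ starting (or ending) with $s$ when convenient. Under such a choice, $T_{\tilde s}h_w$ becomes a clean piece of $h_{sw}$ when $sw\in\text{Adm}(C)$, and analogously on the right.

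The expected pairing is $w\leftrightarrow sws$ whenever both lie in $\text{Adm}(C)$ -- a condition controlled by Lemma \ref{14} applied to a suitable $\lambda$ -- with $T_{\tilde s}h_w$ cancelling $h_{sws}T_{\tilde s}$ in the sum. The main obstacle will be the careful case analysis of the four length configurations $(sw\gtrless w,\ ws\gtrless w)$: one must verify that the $c_{\tilde s}$-terms coming from the quadratic relation on the left cancel exactly those on the right, and also handle the boundary situation where $sw>w$ but $sw\notin\text{Adm}(C)$. Theorem \ref{9} is decisive throughout, because it decouples $h_w$ from any specific choice of $\lambda$ or reduced expression, so the required cancellations can be established at the level of $r$-operators without being tangled up in expression-dependent bookkeeping.
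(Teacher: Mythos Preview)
Your overall plan---reduce to the generators $T_{\tilde\tau}$ and $T_{\tilde s}$, and exploit Proposition~\ref{2}, Lemma~\ref{6} and Theorem~\ref{9}---is exactly the paper's. The $\Omega(1)$ step goes through as you say.

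Where you diverge is in the organization of the $T_{\tilde s}$ case. The paper does \emph{not} pair $w$ with $sws$ on the interior. Instead it pairs $x$ with $sx$ \emph{on each side separately} and proves the clean vanishing
\[
T_{\tilde s}h_x+T_{\tilde s}h_{sx}=0\qquad\text{whenever }x<sx\text{ and both lie in }\mathrm{Adm}(C),
\]
via $T_{\tilde s}h_{sx}=c_{\tilde s}h_{sx}=-T_{\tilde s}\,r_{x,sx}(h_{sx})=-T_{\tilde s}h_x$ (using $c_{\tilde s}T_{\tilde s}=T_{\tilde s}c_{\tilde s}$ and Proposition~\ref{2}). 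Symmetrically $(h_y+h_{ys})T_{\tilde s}=0$. This kills every interior contribution, leaving only the two boundary sets $\{x\in\mathrm{Adm}(C):sx\notin\mathrm{Adm}(C)\}$ and $\{y\in\mathrm{Adm}(C):ys\notin\mathrm{Adm}(C)\}$, which are matched by $x\mapsto sxs$; it is \emph{only there} that one proves $T_{\tilde s}h_x=h_{sxs}T_{\tilde s}$.

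Your proposed identity $T_{\tilde s}h_w=h_{sws}T_{\tilde s}$ is precisely this boundary matching, but it is not available for general interior $w$. The paper's argument for it (via Proposition~\ref{2}) needs $\ell(sxs\cdot s)=\ell(sxs)+1$, i.e.\ $sxs<sx$; in the boundary situation this is forced because otherwise $sx\le s\lambda s\in\pi(C)$ by Lemma~\ref{14}, contradicting $sx\notin\mathrm{Adm}(C)$. For interior $w$ with $sw>w$ and $sws>sw$ no such conclusion holds, and the identity can fail. So the case analysis you anticipate would not close on the $w\leftrightarrow sws$ scheme alone; you would be driven back to the $x\leftrightarrow sx$ cancellation, which is really the heart of the proof and is worth isolating from the start.
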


\begin{proof}
For any $\tilde{\tau}\in\Omega(1)$ with $\pi(\tilde{\tau})=\tau$,
\begin{align}\nonumber
T_{\tilde{\tau}}h_C
&=\sum_{w\in\text{Adm}(C)}T_{\tilde{\tau}}h_w\\\nonumber
&=\sum_{w\in\text{Adm}(C)}h_{\tau w\tau^{-1}}T_{\tilde{\tau}}\\\nonumber
&=(\tilde{\tau}\bullet(\sum_{w\in\text{Adm}(C)}h_w))T_{\tilde{\tau}}\\\nonumber
&=h_C T_{\tilde{\tau}}.\nonumber
\end{align}
The second equality holds by definition of $h_C$ and Proposition \ref{2}, and the third equality holds because $h_C$ is stable under the action of $W(1)$.

It remains to show that for any $\tilde{s}\in S^{\text{aff}}(1)$ with $\pi(\tilde{s})=s$, $T_{\tilde{s}}h_C=h_CT_{\tilde{s}}$. The left hand side
$$T_{\tilde{s}}h_C=\sum_{w\in\text{Adm}(C)}T_{\tilde{s}}h_w=\sum_{x,sx\in\text{Adm}(C)}T_{\tilde{s}}h_x+\sum_{y\in\text{Adm}(C),sy\notin\text{Adm}(C)}T_{\tilde{s}}h_y.$$

If $x,sx\in\text{Adm}(C)$, then without loss of generality, we may assume $x<sx\leq \lambda\in\pi(C)$. In this case,
\begin{align}\nonumber
T_{\tilde{s}}h_x+T_{\tilde{s}}h_{sx}
&=T_{\tilde{s}}r_{x,\lambda}(h_{\lambda})+T_{\tilde{s}}r_{sx,\lambda}(h_{\lambda})\\\nonumber
&=T_{\tilde{s}}r_{x,\lambda}(h_{\lambda})+c_{\tilde{s}}r_{sx,\lambda}(h_{\lambda})\\\nonumber
&=T_{\tilde{s}}r_{x,\lambda}(h_{\lambda})+T_{\tilde{s}}(-r_{x,sx}(r_{sx,\lambda}(h_{\lambda})))\\\nonumber
&=T_{\tilde{s}}r_{x,\lambda}(h_{\lambda})+T_{\tilde{s}}(-r_{x,\lambda}(h_{\lambda}))\\\nonumber
&=0.\nonumber
\end{align}
The second equality holds because $T_{\tilde{s}}T_{\tilde{s}\tilde{x}}=c_{\tilde{s}}T_{\tilde{s}\tilde{x}}$ for any $\tilde{x}\in W(1)$ with $\pi(\tilde{x})=x$. The third equality holds because $c_{\tilde{s}}T_{\tilde{s}}=T_{\tilde{s}}c_{\tilde{s}}$ and $c_{\tilde{s}}T_{\tilde{s}\tilde{x}}=T_{\tilde{s}}(c_{\tilde{s}}T_{\tilde{x}})=T_{\tilde{s}}(-r_{x,sx}(T_{\tilde{s}\tilde{x}}))$ for any $\tilde{x}\in W(1)$ with $\pi(\tilde{x})=x$. The fourth equality holds by Proposition \ref{2}. Therefore, 
$$T_{\tilde{s}}h_C=\sum_{x\in\text{Adm}(C),sx\notin\text{Adm}(C)}T_{\tilde{s}}h_x.$$

Similarly, 
$$h_CT_{\tilde{s}}=\sum_{x\in\text{Adm}(C),xs\notin\text{Adm}(C)}h_xT_{\tilde{s}}.$$

But it is easy to check by Lemma \ref{14} that there is a one-to-one correspondence between the two sets $\{x\in\text{Adm}(C)|sx\notin\text{Adm}(C)\}$ and $\{x\in\text{Adm}(C)|xs\notin\text{Adm}(C)\}$, i.e., $y\in\{x\in\text{Adm}(C)|sx\notin\text{Adm}(C)\}$ if and only if $sys\in\{x\in\text{Adm}(C)|xs\notin\text{Adm}(C)\}$. Therefore, it is enough to show that if $x\in\text{Adm}(C)$ and $sx\notin\text{Adm}(C)$, then
$$T_{\tilde{s}}h_x=h_{sxs}T_{\tilde{s}}.$$
Now $x<sx$, and we suppose $x\leq\lambda\in\pi(C)$. If $s\lambda>\lambda$, then by Lemma \ref{14} $sxs\leq s\lambda s$, thus
\begin{align}\nonumber
T_{\tilde{s}}h_x
&=T_{\tilde{s}}r_{x,\lambda}(h_{\lambda})\\\nonumber
&=r_{sx,s\lambda}(T_{\tilde{s}}h_{\lambda})\\\nonumber
&=r_{sx,s\lambda}(h_{sxs}T_{\tilde{s}})\\\nonumber
&=r_{sxs,s\lambda s}(h_{s\lambda s})T_{\tilde{s}}\\\nonumber
&=h_{sxs}T_{\tilde{s}}.\nonumber
\end{align}
The second and fourth equalities hold by Proposition \ref{2}. The third equality holds because $\tilde{s}\bullet h_{\lambda}=h_{s\lambda s}$.

If $s\lambda<\lambda$, then by Lemma \ref{14} $sx\leq\lambda$, but $\lambda<\lambda s$ so by Lemma \ref{14} again $sxs\leq\lambda$ and $sx\leq\lambda s$, therefore $x\leq s\lambda s$. Now let $y=sxs$, then $y\leq\lambda$ and $sys\leq s\lambda s$, therefore applying a similar proof as above, we have $h_yT_{\tilde{s}}=T_{\tilde{s}}h_{sys}$, i.e., $T_{\tilde{s}}h_x=h_{sxs}T_{\tilde{s}}$.

This finishes the proof.
\end{proof}

\begin{thm}
The center $\mathcal{Z}_R(0,c_{\tilde{s}})$ of $\mathcal{H}_R(0,c_{\tilde{s}})$ has a basis $\{h_C\}_{C\in\mathcal{F}(W(1))}$, where $\mathcal{F}(W(1))$ is the family of finite conjugacy classes in $W(1)$.
\end{thm}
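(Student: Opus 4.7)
The plan is to verify two things: the $h_C$ are linearly independent, and they span $\mathcal{Z}_R(0,c_{\tilde{s}})$. The previous lemma already shows $h_C \in \mathcal{Z}_R(0,c_{\tilde{s}})$, so containment of the span in the center is automatic.

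The key structural observation, used throughout, is that the maximal-length stratum of $\mathrm{supp}(h_C)$ is precisely $\{\tilde{\lambda}:\tilde{\lambda}\in C\}$, each occurring with coefficient $1$. Indeed, $\ell$ is constant on the $W(1)$-conjugacy class $C\subset\Lambda(1)$ by the Preliminary section, and every $w\in\mathrm{Adm}(C)$ with $w\notin\pi(C)$ satisfies $w<\lambda$ for some $\lambda\in\pi(C)$, hence $\ell(w)<\ell(\lambda_0)$, so the operator $r_{w,\lambda}(h_\lambda)$ contributes only strictly shorter terms.

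For linear independence, suppose $\sum_C a_C h_C=0$ with $a_C\in R$, and pick $C_0$ of maximal length $N$ among those $C$ with $a_C\neq 0$. Fix any $\tilde{\lambda}\in C_0$. I claim the coefficient of $T_{\tilde\lambda}$ on the left-hand side is $a_{C_0}$: a nonzero contribution from $h_{C'}$ for some $C'\ne C_0$ with $a_{C'}\ne 0$ requires $\tilde{\lambda}\in\mathrm{supp}(h_{C'})$, hence $\ell(\tilde\lambda)=N\le\ell(C')$; maximality forces $\ell(C')=N$, whence $\tilde\lambda$ would have to be a top-length term of $h_{C'}$, i.e.\ $\tilde\lambda\in C'$, contradicting $C'\ne C_0$. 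So $a_{C_0}=0$, a contradiction.

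For spanning I would induct on $N:=\max\{\ell(\tilde{w}):\tilde{w}\in\mathrm{supp}(h)\}$. Given $h\in\mathcal{Z}_R(0,c_{\tilde{s}})$, Theorem \ref{11} decomposes $\mathrm{supp}(h)_{\max}$ as a finite disjoint union of finite $W(1)$-conjugacy classes $C_1,\dotsc,C_k$; by Lemma \ref{4} each $C_i\subset\Lambda(1)$, and the proof of Theorem \ref{11} also shows the coefficient $a_{\tilde{w}}$ is constant on each $C_i$, say $a_{\tilde{w}}=a_i$. Set
$$h'\;=\;h\;-\;\sum_{i=1}^{k} a_i\, h_{C_i}.$$
Then $h'\in\mathcal{Z}_R(0,c_{\tilde{s}})$, and by the same coefficient-matching argument as above every $\tilde{w}\in\mathrm{supp}(h)_{\max}$ has its coefficient cancelled (it lies in a unique $C_i$, contributes $1$ to $h_{C_i}$, and is not a top-length term of any other $h_{C_j}$). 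Hence $\max\{\ell(\tilde{w}):\tilde{w}\in\mathrm{supp}(h')\}<N$, so induction finishes the proof; the base case $h=0$ is vacuous.

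I do not anticipate a serious obstacle: the heavy lifting is already done by the preceding centrality lemma, by Theorem \ref{11} (top-length support lies in finite conjugacy classes with constant coefficients), and by Theorem \ref{9} (well-definedness of $h_w$). The only careful point is the observation isolating the top-length stratum of $h_C$, which is exactly what makes both the linear-independence and the induction step go through cleanly.
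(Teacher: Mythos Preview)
Your proof is correct and follows essentially the same approach as the paper: both isolate the top-length stratum $\mathrm{supp}(h_C)_{\max}=C$ (with coefficient~$1$) to get linear independence, and both peel off the maximal-length layer via Theorem~\ref{11} and the subtraction $h'=h-\sum_i a_i h_{C_i}$ to get spanning. The only cosmetic difference is that the paper phrases the spanning argument as a minimal-counterexample contradiction while you phrase it as an induction on $N$; your version is slightly more explicit about why distinct $h_{C'}$ cannot interfere at the top length, which the paper leaves implicit in its one-line independence claim.
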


\begin{proof}
First, $\{h_C\}_{C\in\mathcal{F}(W(1))}$ is linearly independent since $\text{supp}(h_C)_{\max}$ differs.

Next, we show that $\bigoplus_{C\in\mathcal{F}(W(1))}Rh_C = \mathcal{Z}_R(0,c_{\tilde{s}})$. We prove this by contradiction, so assume that $\bigoplus_{C\in\mathcal{F}(W(1))}Rh_C\subsetneqq \mathcal{Z}_R(0,c_{\tilde{s}})$.

Let $h$ be an element in $\mathcal{Z}_R(0,c_{\tilde{s}})-\bigoplus_{C\in\mathcal{F}(W(1))}Rh_C$ and $\max_{w\in\text{supp}(h)}\ell(w)\leq \max_{w\in \text{supp}(h')}\ell(w)$ for any $h'\in \mathcal{Z}_R(0,c_{\tilde{s}})-\bigoplus_{C\in\mathcal{F}(W(1))}Rh_C$.

Let $h=\sum_{\tilde{w}\in W(1)}a_{\tilde{w}}T_{\tilde{w}}$ and $M=\{\tilde{w}\in W(1)| \tilde{w} \ \ \text{maximal length with}\ \ a_{\tilde{w}}\neq 0\}$. By Theorem \ref{11}, $M$ is a union of some finite conjugacy class $C_i$'s. If $\tilde{w},\tilde{w}'\in C_i$ for some $i$, then $a_{\tilde{w}}= a_{\tilde{w}'}$, so we set $a_{C_i}=a_{\tilde{w}}$ for any $\tilde{w}\in C_i$. Let $h'=h-\sum_i a_{C_i} h_{C_i}$, then $h'\in \mathcal{Z}_R(0,c_{\tilde{s}})-\bigoplus_{C\in\mathcal{F}(W(1))}Rh_C$. But $\max_{w\in\text{supp}(h')}\ell(w)<\max_{w\in\text{supp}(h)}\ell(w)$. That is a contradiction.
\end{proof}

\section{Examples}
Given a finite conjugacy class $C$ in $W(1)$, we can write out the corresponding central element $h_C$ as follow. 

First we know $\pi(C)$, so we can write out $h_{\lambda,C}$ for each $\lambda\in\pi(C)$. For other $x\in\text{Adm}(C)$, it is easy to find a $\lambda\in\pi(C)$ such that $x<\lambda$. Then we can apply the operator $r_{x,\lambda}$ on $h_{\lambda,C}$ by changing some factors $T_{\tilde{s}}$ to $-c_{\tilde{s}}$. Adding up all these terms, we get $h_C$.

In this section, we give two examples to show how the above process works.

\begin{example}
In $GL_2$ case, the Iwahori Weyl group $W=W^{\text{aff}}\rtimes\Omega$. The affine Weyl group $W^{\text{aff}}$ is generated by $S^{\text{aff}}=\{s_0,s_1\}$. The group $\Omega$ is generated by $\tau$ and $\tau s_0=s_1\tau,\tau s_1=s_0\tau$.

Suppose $C_1$ is a finite conjugacy class in $W(1)$ with $$\pi(C_1)=\{s_0s_1s_0s_1,s_1s_0s_1s_0\}.$$ Then $\text{Adm}(C_1)=\{s_0s_1s_0s_1,s_1s_0s_1s_0,s_0s_1s_0,s_1s_0s_1,s_0s_1,s_1s_0,s_0,s_1,1\}$. 

Suppose 
$$h_{s_0s_1s_0s_1,C_1}=\sum_{t\in Z_1}T_{\tilde{s}_0\tilde{s}_1\tilde{s}_0\tilde{s}_1t},$$
for some subset $Z_1\subseteq Z$. Then
$$h_{s_1s_0s_1s_0,C_1}=\sum_{t\in Z_1}T_{\tilde{s}_1\tilde{s}_0\tilde{s}_1t\tilde{s}_0},$$
where $\tilde{s}_1\tilde{s}_0\tilde{s}_1t\tilde{s}_0$ is indeed a lifting of $s_1s_0s_1s_0$.

Since $s_0s_1s_0,s_1s_0s_1<s_0s_1s_0s_1$, we have
$$h_{s_0s_1s_0,C_1}=r_{s_0s_1s_0,s_0s_1s_0s_1}(h_{s_0s_1s_0s_1,C_1})=\sum_{t\in Z_1}-T_{\tilde{s}_0\tilde{s}_1\tilde{s}_0}c_{\tilde{s}_1t},$$
$$h_{s_1s_0s_1,C_1}=r_{s_1s_0s_1,s_0s_1s_0s_1}(h_{s_0s_1s_0s_1,C_1})=\sum_{t\in Z_1}-c_{\tilde{s}_0}T_{\tilde{s}_1\tilde{s}_0\tilde{s}_1t}.$$

Since $s_0s_1,s_1s_0<s_0s_1s_0s_1$, we have
$$h_{s_0s_1,C_1}=r_{s_0s_1,s_0s_1s_0s_1}(h_{s_0s_1s_0s_1,C_1})=\sum_{t\in Z_1}c_{\tilde{s}_0}c_{\tilde{s}_1}T_{\tilde{s}_0\tilde{s}_1t},$$
$$h_{s_1s_0,C_1}=r_{s_1s_0,s_0s_1s_0s_1}(h_{s_0s_1s_0s_1,C_1})=\sum_{t\in Z_1}c_{\tilde{s}_0}T_{\tilde{s}_1\tilde{s}_0}c_{\tilde{s}_1t}.$$

Since $s_0,s_1<s_0s_1s_0s_1$, we have
$$h_{s_0,C_1}=r_{s_0,s_0s_1s_0s_1}(h_{s_0s_1s_0s_1,C_1})=\sum_{t\in Z_1}-T_{\tilde{s}_0}c_{\tilde{s}_1}c_{\tilde{s}_0}c_{\tilde{s}_1t},$$
$$h_{s_1,C_1}=r_{s_1,s_0s_1s_0s_1}(h_{s_0s_1s_0s_1,C_1})=\sum_{t\in Z_1}-c_{\tilde{s}_0}c_{\tilde{s}_1}c_{\tilde{s}_0}T_{\tilde{s}_1t}.$$

Since $1<s_0s_1s_0s_1$, we have
$$h_{1,C_1}=r_{1,s_0s_1s_0s_1}(h_{s_0s_1s_0s_1,C_1})=\sum_{t\in Z_1}c_{\tilde{s}_0}c_{\tilde{s}_1}c_{\tilde{s}_0}c_{\tilde{s}_1t}.$$

We can easily tell that the parity of of sign is determined by length difference. 

Therefore the corresponding central element is 
\begin{align}\nonumber
h_{C_1}&=\sum_{t\in Z_1}T_{\tilde{s}_0\tilde{s}_1\tilde{s}_0\tilde{s}_1t}+T_{\tilde{s}_1\tilde{s}_0\tilde{s}_1t\tilde{s}_0}-T_{\tilde{s}_0\tilde{s}_1\tilde{s}_0}c_{\tilde{s}_1t}\\\nonumber
&-c_{\tilde{s}_0}T_{\tilde{s}_1\tilde{s}_0\tilde{s}_1t}
+c_{\tilde{s}_0}c_{\tilde{s}_1}T_{\tilde{s}_0\tilde{s}_1t}+c_{\tilde{s}_0}T_{\tilde{s}_1\tilde{s}_0}c_{\tilde{s}_1t}\\\nonumber
&-T_{\tilde{s}_0}c_{\tilde{s}_1}c_{\tilde{s}_0}c_{\tilde{s}_1t}-c_{\tilde{s}_0}c_{\tilde{s}_1}c_{\tilde{s}_0}T_{\tilde{s}_1t}+c_{\tilde{s}_0}c_{\tilde{s}_1}c_{\tilde{s}_0}c_{\tilde{s}_1t}.
\end{align}

Suppose $C_2$ is another finite conjugacy class in $W(1)$ with $$\pi(C_2)=\{s_0s_1s_0\tau,s_1s_0s_1\tau\}.$$ Then $\text{Adm}(C_2)=\{s_0s_1s_0\tau,s_1s_0s_1\tau,s_0s_1\tau,s_1s_0\tau,s_0\tau,s_1\tau,\tau\}$.

Suppose
$$h_{s_0s_1s_0\tau,C_2}=\sum_{t\in Z_2}T_{\tilde{s}_0\tilde{s}_1\tilde{s}_0\tilde{\tau}t},$$
for some subset $Z_2\subseteq Z$. Then
$$h_{s_1s_0s_1\tau,C_2}=\sum_{t\in Z_2}T_{\tilde{s}_1\tilde{s}_0\tilde{s}_1\tilde{s}_0\tilde{\tau}t\tilde{s}_1^{-1}}=\sum_{t\in Z_2}T_{\tilde{s}_1\tilde{s}_0\tilde{s}_1\tilde{\tau}(\tilde{\tau}^{-1}\tilde{s}_0\tilde{\tau}t\tilde{s}_1^{-1})},$$
where $(\tilde{\tau}^{-1}\tilde{s}_0\tilde{\tau}t\tilde{s}_1^{-1})$ is an element in $Z$. So $\tilde{s}_1\tilde{s}_0\tilde{s}_1\tilde{\tau}(\tilde{\tau}^{-1}\tilde{s}_0\tilde{\tau}t\tilde{s}_1^{-1})$ is indeed a lifting of $s_1s_0s_1\tau$.

Since $s_0s_1\tau,s_1s_0\tau<s_0s_1s_0\tau$, we have
$$h_{s_0s_1\tau,C_2}=r_{s_0s_1\tau,s_0s_1s_0\tau}(h_{s_0s_1s_0\tau,C_2}),\quad h_{s_1s_0\tau,C_2}=r_{s_1s_0\tau,s_0s_1s_0\tau}(h_{s_0s_1s_0\tau,C_2}).$$

Since $s_0\tau,s_1\tau<s_0s_1s_0\tau$, we have
$$h_{s_0\tau,C_2}=r_{s_0\tau,s_0s_1s_0\tau}(h_{s_0s_1s_0\tau,C_2}),\quad h_{s_1\tau,C_2}=r_{s_1\tau,s_0s_1s_0\tau}(h_{s_0s_1s_0\tau,C_2}).$$

Since $\tau<s_0s_1s_0\tau$, we have
$$h_{\tau,C_2}=r_{\tau,s_0s_1s_0\tau}(h_{s_0s_1s_0\tau,C_2}).$$

Therefore the corresponding central element is 
\begin{align}\nonumber
h_{C_2}&=\sum_{t\in Z_2}T_{\tilde{s}_0\tilde{s}_1\tilde{s}_0\tilde{\tau}t}+T_{\tilde{s}_1\tilde{s}_0\tilde{s}_1\tilde{\tau}(\tilde{\tau}^{-1}\tilde{s}_0\tilde{\tau}t\tilde{s}_1^{-1})}-T_{\tilde{s}_0\tilde{s}_1}c_{\tilde{s}_0}T_{\tilde{\tau}t}-c_{\tilde{s}_0}T_{\tilde{s}_1\tilde{s}_0\tilde{\tau}t}\\\nonumber
&+c_{\tilde{s}_0}c_{\tilde{s}_1}T_{\tilde{s}_0\tilde{\tau}t}+c_{\tilde{s}_0}T_{\tilde{s}_1}c_{\tilde{s}_0}T_{\tilde{\tau}t}-c_{\tilde{s}_0}c_{\tilde{s}_1}c_{\tilde{s}_0}T_{\tilde{\tau}t}.
\end{align}
\end{example}

\begin{example}
In $SL_3$ case, the Iwahori Weyl group $W=W^{\text{aff}}$. The affine Weyl group $W^{\text{aff}}$ is generated by $S^{\text{aff}}=\{s_0,s_1,s_2\}$ with braid relations $s_is_js_i=s_js_is_j$ for $i\neq j$.

Suppose $C$ is a finite conjugacy class in $W(1)$ with $$\pi(C)=\{s_0s_1s_2s_1,s_1s_0s_1s_2,s_2s_0s_2s_1,s_1s_2s_1s_0,s_2s_1s_0s_1,s_1s_2s_0s_2\}.$$ Then $$\text{Adm}(C)=\{s_0s_1s_2s_1,s_1s_0s_1s_2,s_2s_0s_2s_1,s_1s_2s_1s_0,s_2s_1s_0s_1,s_1s_2s_0s_2,$$$$s_1s_2s_1,s_1s_0s_1,s_2s_0s_2,s_0s_1s_2,s_0s_2s_1,s_1s_0s_2,s_1s_2s_0,s_2s_1s_0,s_2s_0s_1,$$$$s_0s_1,s_0s_2,s_1s_2,s_2s_1,s_1s_0,s_2s_0,s_0,s_1,s_2,1\}.$$

Suppose $$h_{s_0s_1s_2s_1,C}=\sum_{t\in Z'}T_{\tilde{s}_0\tilde{s_1}\tilde{s}_2\tilde{s}_1t},$$
for some subset $Z'\subseteq Z$. Then
\begin{align}\nonumber
h_{s_1s_0s_1s_2,C}&=\sum_{t\in Z'}T_{\tilde{s}_1t\tilde{s}_0\tilde{s_1}\tilde{s}_2},\\\nonumber
h_{s_2s_0s_2s_1,C}&=\sum_{t\in Z'}T_{\tilde{s}_2\tilde{s}_0\tilde{s_1}\tilde{s}_2\tilde{s}_1t\tilde{s}_2^{-1}}=\sum_{t\in Z'}T_{\tilde{s}_2\tilde{s}_0\tilde{s}_2\tilde{s}_1(\tilde{s}_1^{-1}\tilde{s}_2^{-1}\tilde{s}_1\tilde{s}_2\tilde{s}_1t\tilde{s}_2^{-1})},\\\nonumber
h_{s_1s_2s_1s_0,C}&=\sum_{t\in Z'}T_{\tilde{s_1}\tilde{s}_2\tilde{s}_1t\tilde{s}_0},\\\nonumber
h_{s_2s_1s_0s_1,C}&=\sum_{t\in Z'}T_{\tilde{s}_2\tilde{s}_1t\tilde{s}_0\tilde{s_1}},\\\nonumber
h_{s_1s_2s_0s_2,C}&=\sum_{t\in Z'}T_{\tilde{s}_2^{-1}\tilde{s_1}\tilde{s}_2\tilde{s}_1t\tilde{s}_0\tilde{s}_2}=\sum_{t\in Z'}T_{(\tilde{s}_2^{-1}\tilde{s}_1\tilde{s}_2\tilde{s}_1t\tilde{s}_2^{-1}\tilde{s}_1^{-1})\tilde{s}_1\tilde{s}_2\tilde{s}_0\tilde{s}_2},\nonumber
\end{align}
where $\tilde{s}_1^{-1}\tilde{s}_2^{-1}\tilde{s}_1\tilde{s}_2\tilde{s}_1t\tilde{s}_2^{-1},\tilde{s}_2^{-1}\tilde{s}_1\tilde{s}_2\tilde{s}_1t\tilde{s}_2^{-1}\tilde{s}_1^{-1}$ are elements in $Z$. So the elements $\tilde{s}_2\tilde{s}_0\tilde{s}_2\tilde{s}_1(\tilde{s}_1^{-1}\tilde{s}_2^{-1}\tilde{s}_1\tilde{s}_2\tilde{s}_1t\tilde{s}_2^{-1})$ and $(\tilde{s}_2^{-1}\tilde{s}_1\tilde{s}_2\tilde{s}_1t\tilde{s}_2^{-1}\tilde{s}_1^{-1})\tilde{s}_1\tilde{s}_2\tilde{s}_0\tilde{s}_2$ are indeed liftings of $s_2s_0s_2s_1$ and $s_1s_2s_0s_2$ respectively.

Since $s_1s_2s_1,s_0s_1s_2,s_0s_2s_1<s_0s_1s_2s_1;s_1s_0s_1,s_1s_0s_2<s_1s_0s_1s_2;s_2s_0s_2<s_2s_0s_2s_1;s_1s_2s_0,s_2s_1s_0<s_1s_2s_1s_0;s_2s_0s_1<s_2s_1s_0s_1$, we have 
$$h_{s_1s_2s_1,C}=r_{s_1s_2s_1,s_0s_1s_2s_1}(h_{s_0s_1s_2s_1,C}),h_{s_1s_0s_1,C}=r_{s_1s_0s_1,s_1s_0s_1s_2}(h_{s_1s_0s_1s_2,C}),$$
$$h_{s_2s_0s_2,C}=r_{s_2s_0s_2,s_2s_0s_2s_1}(h_{s_2s_0s_2s_1,C}),h_{s_0s_1s_2,C}=r_{s_0s_1s_2,s_0s_1s_2s_1}(h_{s_0s_1s_2s_1,C}),$$
$$h_{s_0s_2s_1,C}=r_{s_0s_2s_1,s_0s_1s_2s_1}(h_{s_0s_1s_2s_1,C}),h_{s_1s_0s_2,C}=r_{s_1s_0s_2,s_1s_0s_1s_2}(h_{s_1s_0s_1s_2,C}),$$
$$h_{s_1s_2s_0,C}=r_{s_1s_2s_0,s_1s_2s_1s_0}(h_{s_1s_2s_1s_0,C}),h_{s_2s_1s_0,C}=r_{s_2s_1s_0,s_1s_2s_1s_0}(h_{s_1s_2s_1s_0,C}),$$
$$h_{s_2s_0s_1,C}=r_{s_2s_0s_1,s_2s_1s_0s_1}(h_{s_2s_1s_0s_1,C}).$$

Since $s_0s_1,s_0s_2,s_1s_2,s_2s_1<s_0s_1s_2s_1;s_1s_0,s_2s_0<s_1s_2s_1s_0$, we have
$$h_{s_0s_1,C}=r_{s_0s_1,s_0s_1s_2s_1}(h_{s_0s_1s_2s_1,C}),h_{s_0s_2,C}=r_{s_0s_2,s_0s_1s_2s_1}(h_{s_0s_1s_2s_1,C}),$$
$$h_{s_1s_2,C}=r_{s_1s_2,s_0s_1s_2s_1}(h_{s_0s_1s_2s_1,C}),h_{s_2s_1,C}=r_{s_2s_1,s_0s_1s_2s_1}(h_{s_0s_1s_2s_1,C}),$$
$$h_{s_1s_0,C}=r_{s_1s_0,s_1s_2s_1s_0}(h_{s_1s_2s_1s_0,C}),h_{s_2s_0,C}=r_{s_2s_0,s_1s_2s_1s_0}(h_{s_1s_2s_1s_0,C}).$$

Since $s_0,s_1,s_2<s_0s_1s_2s_1$, we have
$$h_{s_0,C}=r_{s_0,s_0s_1s_2s_1}(h_{s_0s_1s_2s_1,C}),h_{s_1,C}=r_{s_1,s_0s_1s_2s_1}(h_{s_0s_1s_2s_1,C}),$$
$$h_{s_2,C}=r_{s_2,s_0s_1s_2s_1}(h_{s_0s_1s_2s_1,C}).$$

Since $1<s_0s_1s_2s_1$, we have
$$h_{1,C}=r_{1,s_0s_1s_2s_1}(h_{s_0s_1s_2s_1,C}).$$

Therefore the corresponding central element is
\begin{align}\nonumber
h_C&=\sum_{t\in Z'}T_{\tilde{s}_0\tilde{s}_1\tilde{s}_2\tilde{s}_1t}+T_{\tilde{s}_1t\tilde{s}_0\tilde{s_1}\tilde{s}_2}+T_{\tilde{s}_2\tilde{s}_0\tilde{s}_2\tilde{s}_1(\tilde{s}_1^{-1}\tilde{s}_2^{-1}\tilde{s}_1\tilde{s}_2\tilde{s}_1t\tilde{s}_2^{-1})}\\\nonumber
&+T_{\tilde{s}_1\tilde{s}_2\tilde{s}_1t\tilde{s}_0}+T_{\tilde{s}_2\tilde{s}_1t\tilde{s}_0\tilde{s}_1}+T_{(\tilde{s}_2^{-1}\tilde{s}_1\tilde{s}_2\tilde{s}_1t\tilde{s}_2^{-1}\tilde{s}_1^{-1})\tilde{s}_1\tilde{s}_2\tilde{s}_0\tilde{s}_2}\\\nonumber
&-c_{\tilde{s}_0}T_{\tilde{s}_1\tilde{s}_2\tilde{s}_1t}-T_{\tilde{s}_1t\tilde{s}_0\tilde{s_1}}c_{\tilde{s}_2}-T_{\tilde{s}_2\tilde{s}_0\tilde{s}_2}c_{\tilde{s}_1(\tilde{s}_1^{-1}\tilde{s}_2^{-1}\tilde{s}_1\tilde{s}_2\tilde{s}_1t\tilde{s}_2^{-1})}\\\nonumber
&-T_{\tilde{s}_0\tilde{s}_1\tilde{s}_2}c_{\tilde{s}_1t}-T_{\tilde{s}_0}c_{\tilde{s}_1}T_{\tilde{s}_2\tilde{s}_1t}-T_{\tilde{s}_1t\tilde{s}_0}c_{\tilde{s_1}}T_{\tilde{s}_2}\\\nonumber
&-T_{\tilde{s}_1\tilde{s}_2}c_{\tilde{s}_1t}T_{\tilde{s}_0}-c_{\tilde{s}_1}T_{\tilde{s}_2\tilde{s}_1t\tilde{s}_0}-T_{\tilde{s}_2}c_{\tilde{s}_1t}T_{\tilde{s}_0\tilde{s}_1}\\\nonumber
&+T_{\tilde{s}_0\tilde{s}_1}c_{\tilde{s}_2}c_{\tilde{s}_1t}+T_{\tilde{s}_0}c_{\tilde{s}_1}T_{\tilde{s}_2}c_{\tilde{s}_1t}+c_{\tilde{s}_0}T_{\tilde{s}_1\tilde{s}_2}c_{\tilde{s}_1t}+c_{\tilde{s}_0}c_{\tilde{s}_1}T_{\tilde{s}_2\tilde{s}_1t}\\\nonumber
&+T_{\tilde{s}_1}c_{\tilde{s}_2}c_{\tilde{s}_1t}T_{\tilde{s}_0}+c_{\tilde{s}_1}T_{\tilde{s}_2}c_{\tilde{s}_1t}T_{\tilde{s}_0}-T_{\tilde{s}_0}c_{\tilde{s}_1}c_{\tilde{s}_2}c_{\tilde{s}_1t}\\\nonumber
&-c_{\tilde{s}_0}c_{\tilde{s}_1}c_{\tilde{s}_2}T_{\tilde{s}_1t}-c_{\tilde{s}_0}c_{\tilde{s}_1}T_{\tilde{s}_2}c_{\tilde{s}_1t}+c_{\tilde{s}_0}c_{\tilde{s}_1}c_{\tilde{s}_2}c_{\tilde{s}_1t}.
\end{align}
\end{example}

\end{document}